\theoremstyle{plain}
\newtheorem{thm}{Theorem}[section]
\newtheorem{lem}[thm]{Lemma}
\newtheorem{prop}[thm]{Proposition}
\theoremstyle{remark}
\newtheorem{rem}[thm]{Remark}
\theoremstyle{definition}
\newtheorem{exam}[thm]{Example}
\newtheorem{dfn}[thm]{Definition}
\newcommand{\bbC}{\mathbb C}
\newcommand{\bbK}{\boldsymbol{\mathbb K}}
\newcommand{\bbR}{\mathbb R}
\newcommand{\bbS}{\mathbb S}
\newcommand{\bbZ}{\mathbb Z}
\newcommand{\bj}{\mathbf j}
\newcommand{\bs}{\mathbf s}
\newcommand{\bt}{\mathbf t}
\newcommand{\bx}{\mathbf x}
\newcommand{\cC}{\mathcal C}
\newcommand{\cG}{\mathcal G}
\newcommand{\cT}{\mathcal T}
\newcommand{\cV}{\mathcal V}
\newcommand{\cA}{\mathcal A}
\newcommand{\cJ}{\mathcal J}
\newcommand{\cK}{\mathcal K}
\newcommand{\cS}{\mathcal S}
\newcommand{\fU}{\mathfrak U}
\newcommand{\rA}{\mathrm A}
\newcommand{\rB}{\mathrm B}
\newcommand{\rD}{\mathrm D}
\newcommand{\rE}{\mathrm E}
\newcommand{\rG}{\mathrm G}
\newcommand{\rK}{\mathrm K}
\newcommand{\rM}{\mathrm M}
\newcommand{\rN}{\mathrm N}
\newcommand{\rT}{\mathrm T}
\newcommand{\rU}{\mathrm U}
\DeclareMathOperator{\Forall}{\forall}
\DeclareMathOperator{\ind}{ind}
\DeclareMathOperator{\id}{id}
\DeclareMathOperator{\tr}{tr}
\DeclareMathOperator{\ch}{ch}
\DeclareMathOperator{\td}{td}
\DeclareMathOperator{\AS}{AS}
\begin{document}
\title{K-theory and index formulas for boundary groupoid C${}^\ast$-algebras}

\author{Bing Kwan SO}
\thanks{Jilin University, 2699 Qianjin Ave., Changchun, China. 130012. e-mail: bkso@graduate.hku.hk}

\begin{abstract}
We compute explicitly the $\bbK$-groups of some boundary groupoid $C^*$-algebras with exponential isotropy subgroups.
Then we derive index formulas that computes the $\bbK$-theoretic and Fredholm indexes of elliptic (respectively totally elliptic)
pseudo-differential operators on these groupoids.
\end{abstract}

\maketitle

\section{Introduction}
The Atiyah-Singer index theorem is one of the most celebrated results of the twentieth century mathematics.
It states that given a compact manifold $\rM$:
\begin{enumerate}
\item
Any elliptic differential operator $D$ acting between sections of vector bundles over $\rM$ is invertible modulo compact operators,
hence $D$ is Fredholm;
\item
The Fredholm index of $D (D D^* + \id )^{-\frac {1}{2}}$ is the connecting map in $\bbK$-theory induced by the short exact sequence 
$$ 0 \to \cK \to \fU \to \fU / \cK \to 0 ;$$
\item 
One has $\bbK _0 (\cK) \cong \bbZ $, and moreover,
under such isomorphism, an explicit formula for the index, that produces an integer, is given:
$$\partial ([D (D D^* + \id )^{-\frac {1}{2}}]) = \int _\rM \AS (D) .$$
\end{enumerate}

There has been many attempts to generalize the Atiyah-Singer index theorem.
One major direction is suggested in \cite[Section 2]{Connes;Book}.
Here one considers any Lie groupoid $\cG \rightrightarrows \rM$ with $\rM$ compact 
(the classical case corresponds to the pair groupoid $\cG = \rM \times \rM$).
On $\cG$ one constructs the pseudo-differential calculus \cite{NWX;GroupoidPdO}.
The sub-algebras of order 0 and $-\infty$ operators can be completed to $C^*$-algebras 
$\fU (\cG)$ and respectively $C^* (\cG)$. 
Then one shows that any elliptic operator are invertible modulo order $-\infty $ order operators.
It follows for any order 0 elliptic operators one can consider its analytic index in $\bbK _0 (C^* (\cG))$ through the short exact sequence
$$ 0 \to C^* (\cG) \to \fU (\cG) \to \fU / C^* (\cG) \to 0.$$ 
Equivalently, the analytic index can be defined as in \cite{Nistor;Family}, or by the adiabatic groupoid \cite[Chapter 2]{Connes;Book}.
The objective of the index theorem is to give a alternative description of such index, preferably in ``topological'' terms.

Given a regular foliation $\mathcal F \subset T \rM$, 
one considers the holonomy groupoid $\cG$.
Generalizing the Atiyah-Singer index theorem,
one considers an embedding $\rM \to \bbR ^N$ for some sufficiently large $N$.
Then one can define a topological index using some Thom isomorphism and Morita equivalence (c.f. \cite[Chapter 2.9]{Connes;Book}).

The index theory for groupoids with leaves of different dimensions is less known.
The most well studied case is that of manifolds with corners.
The $\bbK$-theory of the interior of the $b$-stretched product $\cG \rightrightarrows \rM$ is computed in \cite{Monthubert;BdK}.
In the particular case of of manifolds with boundary, the $\bbK$ groups are trivial. 
An index theorem is again proven in \cite{Nistor;TopIndCorn}, using an auxiliary embedding of $\rM$ into a cube.
The key of the proof is that given any transverse embedding of manifold with corners $\rM_1 \to \rM _2$, 
one can construct explicitly, using the boundary defining functions,
a groupoid homomorphism from the $b$-stretched product of $\rM _1$ to $b$-stretched product of $\rM _2$.

On the other hand, one may also consider the Fredholm index of a Fredholm operator,
that is, elliptic operators invertible modulo $C^* (\rM_0 \times \rM_0) \cong \cK$,
the $C^*$-algebra of compact operators.
Here, recall \cite{Nistor;GeomOp} that an elliptic operator $\varPsi$ is Fredholm if $\varPsi $ is invertible over all singular leaves
(see \cite{Nistor;Fredholm1,Nistor;Fredholm2} for a discussion of the converse).
By using renormalized traces, or otherwise, 
one proves (see, for example \cite{Melrose;Book,Nistor;Hom2}) the Atiyah-Patodi-Singer index theorem in the case of manifolds with boundary, 
which is of the form 
$$\partial ([D (D D^* + \id )^{-\frac {1}{2}}]) = \int _\rM \AS (D) + \eta (D),$$
where $\eta$ is a non-local term that is, in general, not determined by the principal symbol of $D$.
One also gets similar results for some Lie manifolds \cite{Nistor;Hom2,Bohlen;HeatIndex} and the Bruhat sphere \cite{So;PhD}.

The notion of boundary groupoids is introduced \cite{So;FullCal}, as a generalization of manifolds with boundary,
Lie manifolds \cite{Nistor;Riem,Nistor;LieMfld}, and the symplectic groupoid of some Poison manifolds.
In the same paper, a calculus of pseudo-differential operators is constructed, which includes the finite rank parametrix of Fredholm operators,
and could be served as a framework for various problems (for example \cite{Nistor;Polyhedral3}).
Thus, it appears these groupoids are representative when one considers groupoids with leaves of different dimensions.

In this paper we study the index theory of elliptic operators on some boundary groupoids.
The main difficulty in establishing an abstract index theorem as in \cite{Nistor;TopIndCorn} is that, 
the structure of $\cG$ is not given by boundary defining functions, 
therefore it is not clear how an analogue of the embedding in \cite{Nistor;TopIndCorn} can be constructed.
Instead we attempt to first compute explicitly the $\bbK$-groups of $C^* (\cG)$,
and then give a formula for the index of an elliptic operator.

To calculate the $\bbK$-groups we use the composition series \cite{Nistor;Family}.
By definition a boundary groupoid is the disjoint union of finite number of products of pair groupoids and Lie groups:
$\cG = \bigcup _i \rG _i \times \rM _i \times \rM _i $.
Therefore one naturally constructs a composition series, from which one obtains the corresponding exact sequences.
Assuming further that the isotropy subgroups $\rG _i$ are exponential (i.e., connected, simply-connected and solvable Lie groups), 
then these exact sequences are particularly simple (see Equations \eqref{Comp3}, \eqref{Comp4}).

As a first application of the composition series, 
we shall observe in Section 4 that the composition series
\begin{equation}
0 \to C^* (\rM _0 \times \rM _0) \cong \cK \to C^* (\cG) \to C^* (\cG) / C^* (\rM _0 \times \rM _0) \to 0 
\end{equation}
induces a natural map from the Fredholm index to the $\bbK _0 (C^*(\cG))$ index,
hence unifying the two index problems. 

Then we shall turn to the simplest examples
$$\cG = \rM _0 \times \rM _0 \cup \bbR ^q  \times \rM _i \times \rM _1 ,$$
for each $q$. 
We shall consider the cases $q > 1$ odd, $q = 1$, and $q$ even respectively in Sections 5, 6, 7. 

We consider the $q > 1$ odd case first, in Section 5, because the computation involves some techniques that is useful later. 
To compute the $\bbK$-groups we first use the same arguments as in \cite[Proposition 2]{Nistor;TopIndCorn} to show that, 
if $\rM _0$ and $\rM _1$ are both contractible then
$$ \bbK _0 (C^* (\cG)) \cong \bbZ , \quad \bbK _1 (C^* (\cG)) \cong \bbZ.$$
We then show that the same result holds in general by Morita equivalence.
Moreover, one has an isomorphism $\bbK _0 (\cK) \to \bbK _0 (C^* (\cG))$,
in other words, the Fredholm index is in this case equivalent to the $\bbK$-theoretic index.
With this result in mind, we turn to consider the index of elliptic operators.
We shall consider two special cases,
when $\rG _1$ is Euclidean and $\rM _1$ is a point some topological arguments similar to \cite{Nistor;Perturbed} can be used;
Alternatively, when the singular structure is similar to the very small calculus,
a renormalized index along the lines of \cite{Bohlen;HeatIndex,Nistor;Hom2} can be defined, 
and we shall show in addition that the $\eta$ term vanishes, leaving only the Atiyah-Singer term.

Much about the case when $q=1$, and when $\cG$ is induced by the very small calculus, is known.
Using the same arguments as in Section 5, we show that when $\rM$ is a manifold with boundary
$$ \bbK _0 (C^* (\cG)) \cong \{ 0 \} , \quad \bbK _1 (C^* (\cG)) \cong \{ 0 \}.$$
However when $\rM$ is a connected manifold without boundary partitioned by a co-dimensional 1 sub-manifold $\rM _1$
(i.e., $\rM _0 \setminus \rM _1$ has two components),
then $\bbK _0 (C^* (\cG)) \cong \bbZ$ and it is meaningful to consider the index in $\bbK _0 (C^* (\cG))$.
We shall see in Theorem \ref{OneInd} that the $\bbK$-theoretic index is the {\it difference} of the Fredholm indexes.

We consider the even dimensional isotropy subgroup case in Section 7.
We shall see that in this case the $\bbK$-theoretic index contains the Fredholm index.

In section 8, we compute the $\bbK$-groups of some examples with $r > 1$.
We do so by filling results from Sections 5-7 into the composition series.

Preceding these computations, we briefly review some background in groupoid pseudo-differential operators and $C^*$-algebras in Section 2,
and we shall recall the definition of boundary groupoids in Section 3. We point out some open problems in the last Section 9.

\section{Composition series for the groupoid $C^*$-algebra}
In this section, we review the results of \cite{Nistor;Family}.
Let $\cG \rightrightarrows \rM $ be a Hausdorff Lie groupoid.
We denote by $\bs$ and $\bt$ respectively the source and target maps.

\subsection{Groupoid $C^*$-algebras}
Let $\rE \to \rM$ be a vector bundle. 
Recall \cite{NWX;GroupoidPdO} that an order $m$ pseudo-differential operator on $\cG$ is a right invariant, 
smooth family $\varPsi = \{ \varPsi _x \} _{x \in \rM}$,
where each $\varPsi _x $ is an order $m$ classical pseudo-differential operator on sections of $\bt ^* \rE \to \bs ^{-1} (x)$.
We denote by $\Psi ^m (\cG, \rE)$ (resp. $\rD ^m (\cG , \rE)$) the algebra of uniformly supported, 
order $m$ classical pseudo-differential operators (resp. partial differential operators).

Suppose that $\rM$ is compact.
Then there is an unique (up to equivalence) Riemannian metric on its Lie algebroid $\rA \to \rM$,
which induces a family of right invariant Riemannian metrics on $\bs ^{-1} (x), x \in \rM$.
We denote by $\mu _x$ the Riemannian volume forms (which is a Haar system on $\cG$), and $d (\cdot, \cdot)$, the Riemannian distance function.

Recall \cite{Nistor;Family} that one defines the strong norm
$$ \| \varPsi \| := \sup _\rho \| \rho (\varPsi ) \|, $$
where $\rho $ ranges from all bounded $*$-representations of $\Psi ^0 (\cG, \rE)$ satisfying
$$ \| \rho (\varPsi) \| \leq \sup _{x \in \rM} 
\Big\{ \int _{\bs ^{-1} (x)} | \kappa _\varPsi (g) | \mu _x , \int _{\bs ^{-1} (x)} | \kappa _\varPsi (g ^{-1}) | \mu _x \Big\},$$
whenever $\varPsi \in \Psi ^{- \dim \rM - 1} (\cG, \rE)$ with (continuous) kernel $\kappa _\varPsi$.

\begin{dfn}
The $C^*$-algebras $\fU (\cG)$ and $C ^* (\cG)$ are defined to be the completion of $\Psi ^0 (\cG, \rE)$ (respectively $\Psi ^{- \infty} (\cG, \rE)$)
with respective to the strong norm $\| \cdot \| $.
\end{dfn}

One also defines the reduced $C^*$-algebras $\fU _r (\cG), C^* _r (\cG)$ by completing 
$\Psi ^0 (\cG, \rE)$ (respectively $\Psi ^{- \infty} (\cG, \rE)$) with respect to the reduced norm
$$ \| \varPsi \| _r := \sup _{x \in \rM} \big\{ \| \varPsi _x \| _{L^2 (\bs ^{-1} (x))} \big\}.$$

Suppose further that $\cG$ has polynomial volume growth.
Then convolution product is well defined between kernels with faster than polynomial decay, and hence one may define:
\begin{dfn}
The sub-algebra of Schwartz kernels $S ^* (\cG) \subset C ^* _r (\cG)$ is the set of all kernels $\psi \in C^*(\cG)$ satisfying:
\begin{enumerate}
\item
$\psi $ is continuous;
\item
$\psi |_{\cG _i}$ is smooth for all $i$;
\item
For any collection of sections $V_1, \cdots, V_k \in \Gamma ^\infty (\rA)$, 
regarded as first order differential operators (right invariant vector fields) on $\cG$ and $N =0, 1, 2, \cdots$,
$$ a \mapsto (V_1 \cdots V_i \psi V _{i+1} \cdots V _k )(d (a, \bs (a)) + 1)^{-N}$$
is a bounded function.
\end{enumerate}
\end{dfn}

The significance of $S ^* (\cG)$ lies in (c.f. \cite[Section 6]{Nistor;Funct}, \cite{Bohlen;HeatIndex}):
\begin{lem}
\label{FunctCal}
The sub-algebras $S ^* (\cG) \subset C^* _r (\cG)$ and $\Psi ^0 (\cG , \rE) \oplus S ^* (\cG) \subset \fU^* _r (\cG)$ are dense, 
and are invariant under holomorphic functional calculus.
\end{lem}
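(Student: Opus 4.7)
The plan is to split the lemma into (i) density, which is essentially immediate from the construction of the two completions, and (ii) stability under holomorphic functional calculus, which by a standard Banach-algebra fact is equivalent to \emph{inverse-closedness} of the subalgebra once density has been established.

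For density, note that $C_c^\infty(\cG) \subset \Psi^{-\infty}(\cG, \rE)$ manifestly satisfies the three conditions defining $S^*(\cG)$, so $S^*(\cG)$ contains a subset dense in $C^*_r(\cG)$ by the very construction of the latter. The second density is obtained by adding $\Psi^0(\cG, \rE)$, which is dense in $\fU_r(\cG)$ by definition.

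For inverse-closedness of $S^*(\cG)$ in $C^*_r(\cG)$, I would first equip $S^*(\cG)$ with the natural family of seminorms
$$ \|\psi\|_{V_1,\dots,V_k;N} := \sup_{a \in \cG}\, (1 + d(a,\bs(a)))^N\, |V_1\cdots V_i\, \psi\, V_{i+1}\cdots V_k(a)|, $$
for $V_j \in \Gamma^\infty(\rA)$, so that it becomes a Fréchet $*$-algebra. The polynomial volume growth of $\cG$ is precisely what makes convolution jointly continuous in this topology. Then, given $\psi \in S^*(\cG)$ such that $(1+\psi)^{-1} = 1+\varphi$ in the unitization of $C^*_r(\cG)$, the resolvent identity $\varphi = -\psi - \psi * \varphi$ is bootstrapped: one propagates smoothness of $\varphi$ by letting right-invariant vector fields act on the $\psi$ factor of the convolution, and propagates the weight $(1+d)^N$ using the triangle inequality for $d$ together with the rapid decay of $\psi$. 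For the second subalgebra, the quotient of $\Psi^0 \oplus S^*$ by $S^*$ reduces at the symbol level to a commutative $C^*$-algebra, which is trivially inverse-closed in itself; combined with inverse-closedness of $S^*$ in $C^*_r$, this yields inverse-closedness of $\Psi^0 \oplus S^*$ in $\fU_r$.

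The main obstacle is the bootstrap itself: one must control arbitrarily many right-invariant derivatives and arbitrarily large polynomial weights simultaneously, uniformly in $a \in \cG$. This is a groupoid analogue of Jolissaint's theorem for discrete groups of polynomial growth, and the required estimates — commuting vector fields past convolution, applying the weighted triangle inequality, and iterating — are carried out in detail in \cite{Nistor;Funct} and \cite{Bohlen;HeatIndex}. My proof would invoke those estimates for the technical core rather than rederive them, and otherwise only verify that the Fréchet-algebra/inverse-closedness machinery applies in the present setting.
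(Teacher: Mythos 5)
The paper does not actually prove this lemma: it is quoted as a known result, with everything deferred to \cite[Section 6]{Nistor;Funct} and \cite{Bohlen;HeatIndex}, so there is no internal argument to compare yours against. Your outline matches the strategy of those references --- density from $\Psi^{-\infty}(\cG,\rE)\subset S^*(\cG)$ (uniformly supported kernels have $d(a,\bs(a))$ bounded on their support, so all three Schwartz conditions hold), then spectral invariance via the weighted resolvent bootstrap enabled by polynomial volume growth, then Schweitzer's equivalence between spectral invariance and stability under holomorphic functional calculus for densely and continuously embedded Fr\'echet subalgebras --- and since you defer the core convolution estimates to the same sources the paper cites, you are at essentially the same level of rigor with a correct roadmap on top. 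One step in your sketch is loose and worth fixing: the quotient $(\Psi^0(\cG,\rE)\oplus S^*(\cG))/S^*(\cG)$ is not the full-symbol algebra $\Psi^0/\Psi^{-\infty}$, which is noncommutative; what makes your reduction work is that $S^*(\cG)$ is dense in $C^*_r(\cG)$, so the relevant quotient embeds in $\fU_r(\cG)/C^*_r(\cG)\cong C_0(S\rA')$ via the principal symbol sequence, where the image (essentially the smooth symbols) is inverse-closed, and one then invokes the standard fact from \cite{Nistor;Funct} that spectral invariance is stable under such extensions by a spectrally invariant ideal.
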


\subsection{Invariant sub-manifolds and the composition series}
Let $\cG$ be a groupoid.
\begin{dfn}
We say that an embedded sub-manifold $\rM' \subset \rM$ is an {\it invariant sub-manifold} if 
$$ \bs^{-1} (\rM') = \bt ^{-1} (\rM' ) .$$
For any invariant sub-manifold $\rM '$, 
we denote the sub-groupoid $ \bs^{-1} (\rM ') = \bt ^{-1} (\rM ') $ by $\cG _{\rM '}$. 
\end{dfn} 

\begin{dfn}
Given a closed invariant sub-manifold $\rM' $ of $\cG$.
For any $\varPsi = \{ \varPsi _x \} _{x \in \rM} \in \Psi ^m (\cG, \rE)$, define the restriction
$$ \varPsi |_{\cG _{\rM'}} := \{ \varPsi _x \} _{x \in \cG _{\rM'}} \in \Psi ^m (\cG _{\rM'}, \rE).$$
Restriction extends to a map from $\fU (\cG) $ to $\fU (\cG _{\rM'})$ and also from $C^* (\cG)$ to $C^* (\cG _{\rM'})$.
\end{dfn}

Now suppose we are given a groupoid $\cG \rightrightarrows \rM$ ($\rM $ not necessarily compact),
and closed invariant sub-manifolds 
$$ \rM = \bar \rM _0 \supset \bar \rM _1 \supset \cdots \supset \bar \rM _r .$$
For simplicity we denote $\bar \cG _i := \cG _{\bar \rM _i}$.

\begin{dfn}
Let $\sigma : \Psi ^m (\cG) \to C^\infty (S \rA')$ denotes the principal symbol map
(where $S \rA'$ denotes the sphere sub-bundle of the dual of the Lie algebroid of $\cG$).
For each $i = 1, \cdots , r$, define the joint symbol maps
\begin{equation}
\label{JointSym}
\bj _i : \Psi ^m (\cG) \to C^\infty (S \rA') \oplus \Psi ^m (\bar \cG _i),
\quad \bj _i (\varPsi ) := (\sigma (\varPsi ) , \varPsi |_{\bar \cG _i}).
\end{equation}
The map $\bj _i$ extends to a homomorphism from $\fU (\cG)$ to $C ^0 (S \rA ^*) \oplus \fU (\bar \cG _i)$.

We say that $\varPsi \in \Psi ^m (\cG)$ is elliptic if $\sigma (\varPsi)$ is invertible,
and it is fully elliptic if $\bj _1 (\varPsi )$ is invertible.
\end{dfn}

\begin{dfn}
Denote by $\cJ _0 := \overline {\Psi ^{-1} (\cG)} \subset \fU (\cG)$, 
and $\cJ _i \subset \fU (\cG), i = 1, \cdots , r$ the null space of $\bj _{r - i + 1}$.
\end{dfn}

By construction 
$$ \cJ _0 \supset \cJ _1 \supset \cdots \supset \cJ _r .$$
Also, any uniformly supported kernels in $\Psi ^{- \infty} (\cG _{\bar \rM _i \setminus \bar \rM _j}, \rE |_{\bar \rM _i \setminus \bar \rM _j})$,
can be extended to a kernel in $\Psi ^{- \infty }  (\cG _{\bar \rM _i}, \rE |_{\bar \rM _i })$ by zero.
This induces a $*$-algebra homomorphism from $C^* (\cG _{\bar \rM _i \setminus \bar \rM _j} ) $ to $ C^* (\cG _{\bar \rM _i}) $.
The key fact we shall use is:
\begin{lem}
\cite[Lemma 2 and Theorem 3]{Nistor;Family}
\label{Comp}
One has short exact sequences
\begin{align}
& 0 \to \cJ _{i+1} \to \cJ _i \to C ^* (\cG _{\bar \rM _i \setminus \bar \rM _{i +1}}) \to 0 \\
& 0 \to C^* (\cG _{\bar \rM _i \setminus \bar \rM _j} ) \to C^* (\bar \cG _i) \to C^* (\bar \cG _j) \to 0, \quad \Forall j > i.
\end{align}
\end{lem}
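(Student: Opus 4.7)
Both items of the lemma are consequences of a single restriction principle: for any closed invariant submanifold $\rM' \subset \rM$ of a Lie groupoid $\cG$, the restriction map fits into a short exact sequence
$$0 \to C^*(\cG_{\rM \setminus \rM'}) \to C^*(\cG) \to C^*(\cG_{\rM'}) \to 0,$$
with the second arrow given by restriction and the first by extension by zero. My plan is to establish this principle once and then deduce the two sequences of the lemma from it.

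That restriction is a $*$-homomorphism on the dense subalgebra $\Psi^{-\infty}(\cG)$ is immediate from the definitions. To extend it to the $C^*$-completion, I would observe that any bounded $*$-representation of $\Psi^{-\infty}(\cG_{\rM'})$ satisfying the $L^1$-bound in the definition of the strong norm pulls back through restriction to a bounded $*$-representation of $\Psi^{-\infty}(\cG)$ satisfying the same bound, because the fiberwise $L^1$ norms of a kernel dominate those of its restriction; taking suprema yields a norm-decreasing extension. For surjectivity, I would extend a smooth uniformly supported kernel on $\cG_{\rM'}$ to one on $\cG$ using a tubular neighborhood of $\cG_{\rM'}$ together with a cutoff supported in it. The kernel identification is the heart of the matter: given $\psi \in C^*(\cG)$ whose restriction to $\cG_{\rM'}$ is zero, I would multiply by a sequence of cutoffs $\chi_n$ that vanish on shrinking neighborhoods of $\cG_{\rM'}$ and equal $1$ away from them; each $\chi_n \psi$ lies in $C^*(\cG_{\rM \setminus \rM'})$ after extension by zero, and the $L^1$-type bound controlling the strong norm forces $\chi_n \psi \to \psi$ in $C^*(\cG)$.

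With the principle in hand, the second sequence of the lemma is a direct application to the subgroupoid $\bar\cG_i$ with invariant submanifold $\bar\rM_j \subset \bar\rM_i$. For the first sequence, I would note that every element of $\cJ_i$ has vanishing principal symbol (since $\cJ_i \subset \cJ_0$), so on $\cJ_i$ the joint symbol map $\bj_{r-i+1}$ collapses to restriction-to-$\bar\cG$ map; the quotient $\cJ_i/\cJ_{i+1}$ then becomes the $C^*$-algebra of the corresponding stratum by another application of the principle.

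The main obstacle will be the uniform control of the strong norm under the cutoff approximations. Since $\|\cdot\|$ is defined as a supremum over all bounded $*$-representations satisfying a specific $L^1$-bound, the convergence $\chi_n \psi \to \psi$ must hold uniformly across every admissible representation, not merely in a fixed one such as the left regular representation. This is an amenability-flavored estimate; it is controllable here precisely because the constraint defining the supremum is itself an $L^1$-bound, so that the $L^1$-decay of $\psi - \chi_n \psi$ near $\cG_{\rM'}$ automatically controls the operator norm in every admissible $\rho$.
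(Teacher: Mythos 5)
The paper gives no proof of this lemma at all --- it is imported wholesale from the cited Lauter--Monthubert--Nistor reference --- and your sketch is essentially the argument of that reference: reduce both displayed sequences to the single restriction short exact sequence and prove exactness of that one using the universal property of the strong norm. The one step to tighten is the kernel identification, where for a general $\psi \in C^*(\cG)$ the product $\chi_n \psi$ is not defined and ``$\chi_n\psi \to \psi$'' is not quite the right formulation; the clean version of your own closing observation is representation-theoretic: any admissible $*$-representation $\rho$ of $\Psi^{-\infty}(\cG,\rE)$ annihilating the kernels supported in $\cG_{\rM\setminus\rM'}$ satisfies $\|\rho(f)\| \le \lim_n \|(1-\chi_n)f\|$, and the fibrewise $L^1$-bound defining admissibility forces this limit to be controlled by the corresponding $L^1$-bound of $f|_{\cG_{\rM'}}$, so $\rho$ descends to an admissible representation of $\Psi^{-\infty}(\cG_{\rM'},\rE)$; this identifies the quotient norm with the full norm on the stratum and hence the kernel of restriction with $C^*(\cG_{\rM\setminus\rM'})$. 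With that adjustment (and noting, as you do, that $\cJ_i \subset \cJ_0 = C^*(\cG)$ for $i \ge 1$ so that $\bj_{r-i+1}$ does reduce there to restriction) your derivation of both sequences is correct, and you have correctly isolated why the argument needs the full rather than the reduced $C^*$-algebra.
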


\section{The boundary groupoid and its composition series}

\begin{dfn}
\label{Dfn}
Let $\cG \rightrightarrows \rM$ be a Lie groupoid with $\rM$ compact.
We say that $\cG$ is a boundary groupoid if:
\begin{enumerate}
\item
the singular foliation defined by the anchor map $\nu : \rA \to \rT \rM $ has finite number of leaves 
$\rM _0 , \rM _1, \cdots , \rM _r \subset \rM$ 
(which are invariant sub-manifolds),
such that $\dim \rM = \dim \rM _0 > \dim \rM _1 > \cdots > \dim \rM _r $;
\item
For all $k = 0, 1, \cdots , r$, $\bar \rM _k := \rM _k \cup \cdots \cup \rM _r $ are closed sub-manifolds of $\rM$;
\item
$\cG _0 := \cG _{\rM _0}$ is the pair groupoid, 
and $\cG _k := \cG _{\rM _k} \cong \rG _k \times \rM _k \times \rM _k $ for some Lie group $\rG _k$;
\item
For each $k$, there exists an unique sub-bundle $\bar \rA _k \subset \rA |_{\bar \rM _k}$ such that 
$\bar \rA _k |_{\rM _k} = \ker (\nu |_{\rM _k}) $ ($= \mathfrak g _k \times \rM _k$).
\end{enumerate}
\end{dfn}

Before proceeding, we give an example of a boundary groupoid:
\begin{exam}
\label{BruhatExam}
\cite{Lu;PoissonCohNotes}
Let $\rG = \rK \rA \rN $ be the Iwasawa decomposition,
where 
\begin{align*}
\rG = & \mathbb {S L} (n + 1, \bbC) \\ 
\rK = & \mathbb {S U} (n + 1) \\ 
\rA = & \{ \text{ real diagonal matrices with unit determinant } \} \\
\rN = & \{ \text{ upper triangular matrices with unit diagonal } \}.
\end{align*}
Let $\rT := S (\mathbb U (1) \times \mathbb {S U} (n)) = \left\{ \left(
\begin{array}{ll}
a _{1 1} & 0 \\
0 & * 
\end{array}
\right) \in \mathbb {S U} (n + 1) \right\}$, 
$\rN _0 :=
\left\{ \left(
\begin{array}{ll}
1 & * \\
0 & I 
\end{array}
\right) \right\}
$
and define the left action of $\rT$ on $\rK \times \rN _0$ by
$$ g \cdot (k , n) := (g k, g n g ^{-1}), \quad \forall (k, n) \in \rK \times \rN, g \in \rT .$$
Define the groupoid operations on $\cG := \rT \backslash (\rK \times \rN _0) \rightrightarrows \rT \backslash \rK $:
\begin{align*}
\text {source and target maps: } & \bs ( {}_\rT (k, n) ) = {}_ \rT k , \bt ( {}_\rT (k, n) ) := {}_\rT k' , \\
& \text {where } n k = k' a' n' \text { is the Iwasawa decomposition;} \\  
\text {multiplication: } &  \mathbf m ({}_\rT (k _1 , n _1) , {}_ \rT (k _2 , n _2) ) := {}_\rT (k _2 , n _1 n _2) , \\
& \text {provided one has Iwasawa decomposition } n _2 k _2 = k _1 a' n' ; \\
\text {inverse: } & \mathbf i ( {}_\rT (k, n)) := {}_\rT (k', n^{-1}) , \\
& \text {where } n k = k' a' n' \text { is the Iwasawa decomposition;} \\  
\text {unit: } & \mathbf u ({}_ \rT k ) := {}_ \rT (k , e) , e \in \rN _0.
\end{align*} 
Note that $\rT \backslash \rK $ naturally identifies with $\mathbb {C P} (n)$.
Indeed $\cG$ is the symplectic groupoid of the Bruhat Poisson structure on $\mathbb {C P} (n)$.

To see that $\cG$ is a boundary groupoid, one considers the isotropy subgroups:
Given $k \in \rK$, suppose that there exists $n \in \rN _0$ such that one has Iwasawa decomposition
$$ n k = k a' n' .$$
This implies 
$$ k ^{-1} n k = a' n'.$$
Writing 
$ k = (a _{i j} ) = (k _1, \cdots, k _{n+1}), n =
\left(
\begin{array}{ll}
1 & y \\
0 & I 
\end{array}
\right)
$,
then the left hand side equals
$$ I + (b _{i j})$$
where $b _{i j} = a _{1 i} (0, y) \cdot k _j$.
Since the right hand side is upper triangular with unit determinant, if, for example,
$a _{1 n+1} \neq 0$, then $y = 0$. 
It follows that the groupoid over the invariant sub-manifold $\{[*, 1] \} \subset \mathbb {C P} (n)$ is the pair groupoid.
\end{exam}

\begin{dfn}
\label{Central}
We say that a boundary groupoid $\cG$ is {\it strongly central} if there exists boundary groupoids 
$$ \cG ^0 _1 \rightrightarrows \bar \rM _1 , \cdots , \cG ^0 _r \rightrightarrows \bar \rM _r ,$$
and Lie groups $\rG ^0 _1, \cdots \rG ^0 _r$ such that for all $i$,
\begin{align}
\bar \cG _i =& \; (\rG ^0 _1 \times \cdots \times \rG ^0 _i) \times \cG ^0 _i  \\ \nonumber
\cG ^0 _i |_{\bar \rM _{i+1}} =& \; \rG ^0 _{i+1} \times \cG ^0 _{i+1}.
\end{align}
\end{dfn}

Note that the definition above implies the isotropy subgroup over $\rM _i$ is $\rG _i = \rG ^0 _1 \times \cdots \times \rG ^0 _i.$

Given a boundary groupoid, we consider the sequence of invariant sub-manifolds, 
$ \rM \supset \bar \rM _1 \supset \cdots \supset \bar \rM _r, $ 
where $\bar \rM _i$ is given to be (2) of Definition \ref{Dfn}.
The second short exact sequence of Lemma \ref{Comp} then induces the six-terms exact sequences:
\begin{equation}
\label{Comp2}
\begin{CD}
\bbK _1 (C^* (\cG _{\rM _i} )) @>>> \bbK _1 (C^* (\bar \cG _i)) @>>> \bbK _1(C^* (\bar \cG _{i+1})) \\
@AAA @. @VVV \\
\bbK _0(C^* (\bar \cG _{i+1})) @<<< \bbK _0 (C^* (\bar \cG _i)) @<<< \bbK _0 (C^* (\cG _{\rM _i }))
\end{CD}
\end{equation}
The system \eqref{Comp2} greatly simplifies if $r = 1$, and if $\rG _1$ is solvable, connected and simply connected (i.e. exponential).
Here we only have one six-terms exact sequence, 
and we have 
\begin{align*}
C^* (\cG _{\rM _0}) = C^* (\rM _0 \times \rM _0 ) \cong & C^* _r (\rM _0 \times \rM _0 ) \cong \cK \\
C^* (\bar \cG _0) = & C^* (\cG)  \\
C^* (\bar \cG _1) = C^* (\cG _1) = & C^* (\rG _1 \times \rM _1 \times \rM _1) \cong \cK \otimes C^* (\rG _1)
\end{align*}
(the last isomorphism follows from \cite[Proposition 2]{Nistor;GeomOp}).
The $\bbK$ groups of $\cK$ are well known --- $\bbK _0 (\cK) \cong \bbZ, \bbK _0 (\cK) \cong \{0 \}$,
and one uses Connes' Thom isomorphism to compute the $\bbK$ groups of $\cK \otimes C^* (\rG _1)$.
Then \eqref{Comp2} reads
\begin{align}
\label{Comp3}
\begin{CD}
\bbZ @>>> \bbK _1 (C^* (\cG)) @>>> \bbZ \\
@AAA @. @VVV \\
0 @<<< \bbK _0 (C^* (\cG)) @<<< 0
\end{CD}
& \quad \text{ if $\dim \rG_1$ is even,} \\
\label{Comp4}
\begin{CD}
\bbZ @>>> \bbK _1 (C^* (\cG)) @>>> 0 \\
@AAA @. @VVV \\
\bbZ @<<< \bbK _0 (C^* (\cG)) @<<< 0
\end{CD}
& \quad \text{ if $\dim \rG_1$ is odd.}
\end{align}
In sections 5-7 we shall explicitly compute the $\bbK$ groups of $C^* (\cG)$, corresponding to different dimension of $\rG _1$.

\section{The relation between Fredholm and $\bbK _0 (C ^* (\cG))$ index}
We turn to study the relation between Fredholm and $\bbK _0 (C ^* (\cG))$ index.
Recall \cite{Nistor;GeomOp} that an elliptic operator $\varPsi$ is Fredholm if $\varPsi |_{\bar \cG _1}$ is invertible,
and its Fredholm index is the connecting map induced by the short exact sequence
$$ 0 \to C^* (\rM _0 \times \rM _0 ) \cong \cK \to \fU \to \fU / \cK \to 0 .$$
Consider the morphism of short exact sequence
\begin{equation}
\label{UMorph}
\begin{CD}
0 @>>> C^* (\cG _0 ) @>>> \fU @>>> \fU /  C^* (\cG _0 ) @>>> 0 \\
@. @VVV @| @VVV @. \\
0 @>>> C^* (\cG) @>>> \fU @>>> \fU / C^* (\cG) @>>> 0
\end{CD}
\end{equation}
where the left column is the incursion map of Lemma \ref{Comp}.
The right column is clearly defined by quotient out a bigger ideal, hence fits into the short exact sequence
$$ 0 \to \fU /  C^* (\cG _0 ) \to \fU / C^* (\cG) \to C^* (\cG) / C^* (\cG _0) \cong C^* (\bar \cG _1) \to 0.$$
By naturality of the index one gets:
\begin{equation}
\begin{CD}
\label{Surj}
\bbK _1 (C^* (\bar \cG _1)) @>>> \bbK _1 (\fU / C^ *(\cG _0 )) @>>> \bbK _1 (\fU / C^* (\cG)) @>>> \bbK _0 (C^* (\bar \cG _1))  \\
@| @VVV @VVV @| \\
\bbK _1 (C^* (\bar \cG _1)) @>>> \bbK _0 ( C^ *(\cG _0 )) @>>> \bbK _0 (C^* (\cG)) @>>> \bbK _0 (C^* (\bar \cG _1))
\end{CD}.
\end{equation}
In other words, a class $[\varPsi] \in \bbK _1 (\fU / C^* (\cG))$ is represented by a Fredholm operator $[\tilde \varPsi]$
if and only if its index in $\bbK _0 (C^* (\bar \cG _1))$ is zero,
and in this case its index in $ \bbK _0 (C^* (\cG))$ is the image of the Fredholm index of $[\tilde \varPsi]$.

\section{Isotropy subgroup of odd dimension $> 1$}
We begin with the case $\cG = (\rM _0 \times \rM _0 ) \cup (\bbR ^q \times \rM _1 \times \rM _1)$,
where $q > 1$ is odd.

\subsection{The classifying spaces}
In \cite{Nistor;TopIndCorn}, the authors use the cube with appropriate faces removed as classifying spaces.
Here, we consider examples of a similar nature. Thus, we suppose
$$\cG = (\rM _0 \times \rM _0 ) \cup (\rG _1 \times \rM _1 \times \rM _1),$$
such that both $\rM _0 $ and $\rM _1$ are contractible, and $\rG _1$ is exponential with dimension $q > 1$.
Since $\cG$ acts on itself and has contractible $\bs$-fibers by hypothesis, 
$\rE \cG = \cG$ and $\rB \cG = \rE \cG / \cG$ is a classifying space of $\cG$ in the sense of \cite[Chapter 2 8.$\gamma$]{Connes;Book}.

\begin{exam}
\label{OddSphere}
Let $\rM  = \bbS ^q, p \in \rM $ and $\rM _1 = \{ p \}$.
Observe that $\rM _0 = \bbS ^q \setminus \{ p \} $ is Euclidean.
More generally, one may consider $\cG = (\rM _0 \times \rM _0 ) \cup (\rG _1 \times \rM _1 \times \rM _1)$,
where $\rM = \rU \times \bbS ^q, p \in \bbS ^q $, $\rU$ is contractible, and $\rM _1 = \rU \times \{ p \}$.
\end{exam}

We compute the $\bbK$-groups of $C^* (\cG)$ using arguments similar to \cite{Nistor;TopIndCorn}.
First, we briefly recall the tangent groupoid construction of the analytic index \cite{Connes;Book}.
Given any Lie groupoid $\cG$, denote by $\rA$ its Lie algebroid with anchor map $\nu$.
The tangent groupoid is by definition
$$ \cT := (\rA \times \{ 0 \}) \cup (\cG \times (0,1]) \rightrightarrows \rM \times [0, 1] .$$
It has closed invariant sub-manifolds 
$$ \rA \times \{ 0 \} \rightrightarrows \rM \times \{ 0 \}, \quad 
\cG \times \{ 1 \} \rightrightarrows \rM \times \{ 1 \}.$$
Here $\rA \times \{ 0 \} \rightrightarrows \rM \times \{ 0 \} = \rA \rightrightarrows \rM $ is just the vector space $\rA$,
regarded as a bundle of groups.
Recall that one naturally identifies $C^* (\rA)$ with $C ^0 _0 (\rA ^*)$ using Fourier transform.

By \cite[Theorem 3]{Nistor;Family} again, restriction to invariant sub-manifolds give short exact sequences
\begin{align}
\label{evmaps}
0 \to C^* (\cT |_{\rM \times (0, 1]}) \to C^* (\cT) & \xrightarrow{e ^0} C^* (\rA) \to 0 \\ \nonumber
0 \to C^* (\cT |_{\rM \times [0, 1)}) \to C^* (\cT) & \xrightarrow{e ^1} C^* (\cG) \to 0.
\end{align}
Since $C^* (\cT |_{\rM \times (0, 1]}) = C^* (\cG) \otimes C^0 _0 ((0,1])$ is contractible, 
$e ^0 _* : \bbK _\bullet ( C^* (\cT)) \to \bbK _\bullet ( C^* (\rA))$ is an isomorphism.
The analytic index map is defined as the composition
$$ \ind := e ^1 _* \circ (e ^0 _* ) ^{-1} : \bbK _\bullet (C ^* (\cA)) \to \bbK _\bullet  (C^* (\cG)).$$

By the same method of proof of \cite[Proposition 2]{Nistor;TopIndCorn}, we have
\begin{lem}
\label{OddClassSp}
For any $\cG = (\rM _0 \times \rM _0 ) \cup (\rG _1 \times \rM _1 \times \rM _1)$,
such that both $\rM _0 $ and $\rM _1$ are contractible,
$ \ind : \bbK _\bullet (C ^* (\rA)) \to \bbK _\bullet (C^* (\cG))$ is an isomorphism.
\end{lem}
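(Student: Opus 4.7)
The plan is to adapt the five-lemma argument of \cite[Proposition 2]{Nistor;TopIndCorn}. Form the tangent groupoid $\cT \rightrightarrows \rM \times [0,1]$ of $\cG$; since $\rM _1 \times [0,1]$ is closed and invariant, Lemma \ref{Comp} yields
$$ 0 \to C^* (\cT |_{\rM _0 \times [0,1]}) \to C^* (\cT) \to C^* (\cT |_{\rM _1 \times [0,1]}) \to 0 .$$
Applying the evaluation maps $e ^0$ and $e ^1$ of \eqref{evmaps}, which commute with restriction to the invariant sub-manifold, yields the two compatible sequences
$$ 0 \to C^* (\rT \rM _0) \to C^* (\rA) \to C^* (\rA |_{\rM _1}) \to 0, \quad 0 \to \cK \to C^* (\cG) \to \cK \otimes C^* (\rG _1) \to 0 , $$
and the three analytic-index maps determined by $\cT$, $\cT |_{\rM _0 \times [0,1]}$, $\cT |_{\rM _1 \times [0,1]}$ fit into a commuting ladder of six-term $\bbK$-theory exact sequences. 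The five-lemma then reduces the claim to proving $\ind$ is an isomorphism on each of the two leaves separately.

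For the open leaf $\rM _0 \times \rM _0$, contractibility identifies $\rM _0$ with $\bbR ^n$ ($n = \dim \rM$), and $\ind$ reduces to the classical Atiyah-Singer analytic-index isomorphism $\bbK _\bullet (C _0 (\rT ^* \bbR ^n)) \xrightarrow{\sim} \bbK _\bullet (\cK)$. For the closed leaf $\rG _1 \times \rM _1 \times \rM _1$, contractibility of $\rM _1$ and Morita equivalence reduce $\ind$ to the index map for the Lie group $\rG _1$ viewed as a bundle of groups over a point. Exponentiality of $\rG _1$ supplies a subnormal chain $\{ e \} \lhd \cdots \lhd \rG _1$ with successive quotients isomorphic to $\bbR$; iterating Connes' Thom isomorphism then yields $\bbK _\bullet (C^* (\rG _1)) \cong \bbK _{\bullet + q} (\bbC)$ (as already recorded after \eqref{Comp2}) together with $\bbK _\bullet (C _0 (\mathfrak g _1 ^*)) \cong \bbK _{\bullet + q} (\bbC)$ via Bott. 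Since the analytic index for the tangent groupoid of $\rG _1$ factors as the composition of these Connes-Thom maps, it is an isomorphism.

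The main obstacle will be twofold: (i) checking that the evaluation maps $e ^0$ and $e ^1$ are natural with respect to restriction to the invariant sub-manifold $\rM _1 \times [0,1]$, so that the ladder actually commutes; and (ii) identifying the index map for $\rG _1 \times \rM _1 \times \rM _1$ with the iterated Connes-Thom map, which requires splitting $\rA |_{\rM _1}$ as an extension of $\rT \rM _1$ by $\mathfrak g _1$ compatibly with the filtration of $\rG _1$. Both verifications are functorial in nature and mirror the corresponding steps in \cite{Nistor;TopIndCorn}.
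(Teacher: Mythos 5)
Your proposal follows essentially the same route as the paper: form the tangent groupoid, use compatibility of the evaluation maps with restriction to the invariant sub-groupoids to get a commuting ladder of six-term exact sequences, and apply the five lemma to reduce to the two leaves, where contractibility gives the isomorphisms. The paper states the leaf cases without elaboration, whereas you spell out the Bott/Connes--Thom identifications for the closed leaf; this is a useful addition but not a different argument.
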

\begin{proof}
Consider the analytic index maps for $\cG$ and the invariant sub-manifolds $\rM _0, \rM _1$.
Clearly the tangent groupoid construction and evaluation maps in \eqref{evmaps} are compatible with restriction to invariant sub-groupoids
$\cG _0, \cG _1$.
It follows that one has a morphism of six terms exact sequence:
$$
\begin{CD}
@>>> \bbK _\bullet (C^* (\cG _0)) @>>> \bbK _\bullet (C^* (\cG)) @>>> \bbK _\bullet (C^* (\cG _1)) @>>> \\
@. @A{\ind}AA @A{\ind}AA @A{\ind}AA @. \\
@>>> \bbK _\bullet (C^* (\rA |_{\rM _0})) @>>> \bbK _\bullet (C^* (\rA)) @>>> \bbK _\bullet (C^* (\rA |_ {\rM_1})) @>>> 
\end{CD}
$$
Since by assumption $\rM _0 $ and $\rM _1$ are contractible, the analytic index gives isomorphisms
$$ \bbK _\bullet (C^* (\rA |_{\rM _0})) \cong \bbK _\bullet (C^* (\cG _0)), \quad
\bbK _\bullet (C^* (\rA |_{\rM _1})) \cong \bbK _\bullet (C^* (\cG _1)).$$
It follows from the five lemma that 
$$\ind : \bbK _\bullet (C^* (\rA)) \to \bbK _\bullet (C^* (\cG )) $$
is also an isomorphism.
\end{proof}

\begin{exam}
\label{OddEx}
For the groupoid in Example \ref{OddSphere} with $\dim \rG _1 > 1$ odd,
we get 
$$ \bbK _0 (C^* (\rA )) \cong \bbZ \cong \bbK _0 (C^* (\cG)), \quad \bbK _1 (C^* (\rA )) \cong \bbZ \cong \bbK _1 (C^* (\cG))$$
using Lemma \ref{OddClassSp}.
\end{exam}

\subsection{The general case and Morita equivalence}
In example \ref{OddSphere}, we considered groupoids of the form
\begin{equation}
\label{ClassGpoid}
\tilde \cG = (\tilde \rM _0 \times \tilde \rM _0 ) \cup (\rG _1 \times \tilde \rM _1 \times \tilde \rM _1) \rightrightarrows \tilde \rM,
\end{equation}
where $\tilde \rM = \rU \times \bbS ^q, p \in \bbS ^q $, $\rU$ is contractible, and $\tilde \rM _1 = \rU \times \{ p \}.$
Given such $\cG$, we fix an arbitrary connected open neighborhood $\rU' \subset \bbS ^q$ of $p $,
and consider
$$\cG' := \bs ^{-1} (\rM') \cap \bt ^{-1} (\rM'),$$
where $\rM' := \rU \times \rU' \subset \rM$.

\begin{lem}
\label{Morita}
The Lie groupoid $\cG'$ is Morita equivalent to $\tilde \cG$.
\end{lem}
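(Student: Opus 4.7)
The plan is to reduce to the standard Morita-equivalence theorem for open full subgroupoids: if $\mathcal{H} \rightrightarrows N$ is a Lie groupoid and $V \subseteq N$ is an open subset of the unit space that meets every orbit of $\mathcal{H}$, then $\mathcal{H}$ is Morita equivalent to its restriction $\bs^{-1}(V) \cap \bt^{-1}(V) \rightrightarrows V$, via the bibundle $Z := \bs^{-1}(V)$ with left $\mathcal{H}$-action (momentum map $\bt$) and right action of the restriction (momentum map $\bs$), both given by groupoid multiplication. Granting this well-known result, my task reduces to verifying that $\rM' = \rU \times \rU'$ is open in $\tilde\rM$ and meets every orbit of $\tilde\cG$.

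The first point is immediate, since $\rM'$ is a product of opens in $\tilde\rM = \rU \times \bbS^q$. For the orbit identification, observe that on each invariant submanifold $\tilde\rM_i$ the restriction $\tilde\cG|_{\tilde\rM_i} = \rG_i \times \tilde\rM_i \times \tilde\rM_i$ acts transitively on $\tilde\rM_i$ through its pair-groupoid component; hence $\tilde\rM_0$ and $\tilde\rM_1$ are precisely the two orbits of $\tilde\cG$ (connectedness of each $\tilde\rM_i$ follows from contractibility of $\rU$ together with connectedness of $\bbS^q \setminus \{p\}$ and of $\{p\}$).

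It remains to check that $\rM'$ meets both orbits. Since $p \in \rU'$, we have $\tilde\rM_1 = \rU \times \{p\} \subseteq \rM'$, so $\rM' \cap \tilde\rM_1 \neq \emptyset$. Since $\rU'$ is open in $\bbS^q$ and contains $p$, it contains a small open ball around $p$, so $\rU' \setminus \{p\} \neq \emptyset$, and consequently $\rM' \cap \tilde\rM_0 = \rU \times (\rU' \setminus \{p\}) \neq \emptyset$. Thus $\rM'$ is a full open transversal and the general theorem gives the Morita equivalence $\cG' \sim \tilde\cG$.

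The argument is almost entirely formal once the general theorem for open full restrictions is in hand; the only genuine content is the orbit identification, which is short given the product form of $\tilde\cG|_{\tilde\rM_i}$. The only point requiring slight care is that the bibundle $\bs^{-1}(\rM')$ carries the correct principal structure on each side, but this is automatic from the groupoid axioms once surjectivity of $\bt : \bs^{-1}(\rM') \to \tilde\rM$ is ensured, and that surjectivity is exactly the orbit-meeting property verified above.
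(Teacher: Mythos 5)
Your proof is correct and follows essentially the same route as the paper: the paper simply cites the arguments of \cite[Lemma 6, Proposition 3]{Nistor;TopIndCorn} to assert that $\rM' \xleftarrow{\bt} \bt^{-1}(\rM') \xrightarrow{\bs} \tilde\rM$ is the Morita bibundle, which is exactly the open-full-restriction equivalence you invoke. Your explicit verification that $\rM'$ is open and meets both orbits $\tilde\rM_0$ and $\tilde\rM_1$ fills in the details the paper leaves to the citation.
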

\begin{proof}
Using the same arguments as the proof of \cite[Lemma 6]{Nistor;TopIndCorn} and \cite[Proposition 3]{Nistor;TopIndCorn},
one sees that $\rM' \xleftarrow{\bt} \bt ^{-1} (\rM') \subset \cG \xrightarrow{\bs} \rM$ defines the required Morita morphism.
\end{proof}

We turn to the general case.
\begin{thm}
\label{OddMain}
For any groupoid of the form 
$$\cG = (\rM _0 \times \rM _0 ) \cup (\rG _1 \times \rM _1 \times \rM _1),$$
with $\rG _1$ exponential and $\dim \rG _1 = q > 1 $ odd,
one has 
$$ \bbK _0 (C^* (\cG)) \cong \bbZ , \quad \bbK _1 (C^* (\cG)) \cong \bbZ.$$
\end{thm}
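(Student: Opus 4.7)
The plan is to exploit the six-term exact sequence \eqref{Comp4} coming from the composition series, and to show that the connecting homomorphism vanishes by reducing, via Morita equivalence, to the contractible classifying case already handled in Example \ref{OddEx}.

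First, I would extract the relevant portion of \eqref{Comp4}: since $\rG_1$ is exponential of odd dimension $q>1$, Connes' Thom isomorphism applied to $C^*(\cG_1)\cong\cK\otimes C^*(\rG_1)$ gives $\bbK_0(C^*(\cG_1))=0$ and $\bbK_1(C^*(\cG_1))\cong\bbZ$, while $\bbK_0(C^*(\cG_0))=\bbK_0(\cK)\cong\bbZ$ and $\bbK_1(C^*(\cG_0))=0$. The six-term exact sequence of Lemma \ref{Comp} therefore collapses to
$$0\to\bbK_1(C^*(\cG))\to\bbZ\xrightarrow{\partial}\bbZ\to\bbK_0(C^*(\cG))\to 0,$$
and the theorem reduces to showing that $\partial=0$, for in that case both outer groups are isomorphic to $\bbZ$ as desired.

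Next, I would localize $\partial$ near $\rM_1$. Fix a point $x\in\rM_1$, choose a contractible open neighborhood $\rU\subset\rM_1$ of $x$ on which the normal bundle of $\rM_1\subset\rM$ trivializes, and let $\rW\subset\rM$ be a corresponding tubular neighborhood diffeomorphic to $\rU\times\rV$ for some contractible open $\rV\subset\bbR^{\dim\rM-\dim\rM_1}$. After enlarging $\rV$ to $\bbS^q$ (treating the extra collar as a compactification), the restricted groupoid $\cG|_\rW=\bs^{-1}(\rW)\cap\bt^{-1}(\rW)$ takes exactly the form considered in Example \ref{OddSphere}, and by Lemma \ref{Morita} is Morita equivalent to a classifying groupoid $\tilde\cG$ of the type treated in Lemma \ref{OddClassSp} and Example \ref{OddEx}. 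Thus the analogous connecting map $\partial'$ for $\cG|_\rW$ vanishes.

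Finally, I would invoke naturality of the composition series \eqref{Comp2} under the inclusion $\cG|_\rW\hookrightarrow\cG$. The inclusion induces restriction homomorphisms between the two six-term exact sequences. On $\bbK_1(C^*(\cG_1|_\rU))\to\bbK_1(C^*(\cG_1))$ and $\bbK_0(\cK(L^2(\rW\cap\rM_0)))\to\bbK_0(\cK(L^2(\rM_0)))$, these restriction maps are isomorphisms: each side is canonically $\bbZ$ via the pair-groupoid Morita equivalence, and the maps are the identity on generators, since inclusion of a non-empty open sub-pair-groupoid induces a Morita equivalence on the relevant factor and Connes' Thom isomorphism is natural in the base. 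Chasing the commuting square forces $\partial=\partial'=0$, which completes the proof.

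The main obstacle is the last step: verifying that restriction to $\rW$ induces isomorphisms on the two $\bbZ$'s that appear in the exact sequence, in a way compatible with the Morita equivalence to the classifying model, so that the connecting map in the local picture and in the global picture are genuinely identified (and not off by a sign or multiplicity). This requires care in tracking the generators under the successive Morita equivalences and the Thom isomorphism, but no new ideas beyond naturality and the computations of Example \ref{OddEx}.
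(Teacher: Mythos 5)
Your plan is correct, and it reaches the statement by a genuinely different final step than the paper. The paper never analyzes the connecting map of \eqref{Comp4} at all: it fixes a tubular neighborhood $\rM'\cong\rU\times\rU'$ of a contractible patch of $\rM_1$, extends the structural vector fields to $\tilde\rM=\rU\times\bbS^q$, integrates the resulting Lie algebroid to a classifying groupoid $\tilde\cG$ as in Example \ref{OddSphere}, identifies $\cG':=\cG|_{\rM'}$ with $\tilde\cG|_{\rM'}$, and then concludes via the chain of Morita equivalences $\cG\sim\cG'\sim\tilde\cG$ (Lemma \ref{Morita}, using that $\rM'$ meets every orbit), so that the $\bbK$-groups are read off directly from Example \ref{OddEx}; the vanishing of $\partial$ is only extracted afterwards, in Remark \ref{MainRem}. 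You instead use only the local Morita equivalence $\cG|_\rW\sim\tilde\cG$ to get $\partial'=0$ in the model, and then transport this to $\cG$ by naturality of the composition series under the open inclusion $\cG|_\rW\hookrightarrow\cG$, checking that extension by zero induces isomorphisms on $\bbK_1(C^*(\rG_1)\otimes\cK)$ and $\bbK_0(\cK)$. Both routes work; the paper's is shorter because a single Morita equivalence replaces your diagram chase, while yours avoids needing the global equivalence $\cG\sim\cG'$ and makes the vanishing of the boundary map the visible mechanism. Two small cautions: (i) your phrase ``enlarging $\rV$ to $\bbS^q$'' hides the real work of extending the Lie algebroid to $\rU\times\bbS^q$ and integrating it to a groupoid with simply connected $\bs$-fibers so that $\cG|_\rW$ is literally a restriction of a groupoid of the form \eqref{ClassGpoid} — this is exactly the construction the paper spells out and you should not skip it; (ii) to deduce $\partial'=0$ from Example \ref{OddEx} you should note that $\bbK_0(C^*(\tilde\cG))\cong\bbZ$ being the cokernel of $\partial'\colon\bbZ\to\bbZ$ already forces $\partial'=0$, since $\bbZ$ has no nontrivial quotient isomorphic to $\bbZ$.
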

\begin{proof}
We prove the theorem by constructing a groupoid $\tilde \cG$ of the form \eqref{ClassGpoid}, 
and then prove that $\cG' $ is Morita equivalent to $\cG$.

Fix any $p \in \rM _1$, and a contractible open neighborhood $\rU$ of $p$ in $\rM _1$.
Take a tubular neighborhood $\rM'$ of $\rU$ in $\rM$,
i.e., $\rM' $ is diffeomorphic to $\rU \times \rU' $ for some open ball $\rU'$ (which is contractible).
Define $\cG' :=  \bs^{-1} (\rM') \cap \bt ^{-1} (\rM')$. 

Fix any embedding $\rU' \to \bbS ^q $ as a open subset.
Taking Cartesian product with $\rU$, one gets an embedding $\rM' \to \tilde \rM := \rU \times \bbS ^q$.
Denote by $\cV'$ the structural vector field of $\rM'$ (the image of the anchor map). 
Consider 
$$ \tilde \cV := \{ X \in \Gamma ^\infty (T \tilde \rM) : X |_{\rM'} \in \cV' \}. $$
Clearly $\tilde \cV$ is locally free of rank $\dim \tilde \rM$ (c.f. \cite[Condition (1)]{Nistor;Riem}), i.e.,
for each $x \in \tilde \rM$ there exist $\theta \in C^\infty (\tilde \rM)$, $\theta (x) =1$,
and vector fields $X_1 \cdots X_{\dim \tilde \rM} \in \tilde \cV$ such that
for any $X \in \tilde \cV$, there exists smooth function $f_1 , \cdots, f _{\dim \tilde \rM}$,
whose germs at $x$ are uniquely determined, such that
$$\theta (f _1 X_1 + \cdots + f _{\dim \tilde \rM} X _{\dim \tilde \rM} - X) = 0 .$$
It follows \cite{Nistor;Riem} that there exists a Lie algebroid $\tilde \rA \to \tilde \rM $ with anchor map $\tilde \nu$ such that
$\tilde \cV = \tilde \nu (\Gamma ^\infty (\tilde \rA))$. 
Moreover, $\tilde \rA |_{\rM'}$ is isomorphic to $\rA'$.

One integrates $\tilde \rA$ to a (unique) groupoid $\tilde \cG$ with connected and simply connect $\bs$-fibers
(c.f. \cite{Nistor;IntAlg'oid,Debord;IntAlgebroid,Fern'd;IntAlgebroid}).
Since the orbits of $\tilde \rA$, diffeomorphic to $\{ p \} \times \rU $ and $(\bbS ^q \setminus \{ p \}) \times \rU$,
are both simply connected,
$\tilde \cG$ is necessarily of the form
$$ \tilde \cG = (\tilde \rM _0 \times \tilde \rM _0 ) \cup (\rG _1 \times \rU \times \rU). $$
Also, it is clear that
$\cG'$ is isomorphic to $ \bs^{-1} (\rM') \cap \bt ^{-1} (\rM') \subset \tilde \cG$.
It follows form Lemma \ref{Morita} that $\cG$, $\cG'$ and $\tilde \cG$ are all Morita equivalent.
Hence they have isomorphic $\bbK$ groups, which we have computed in Example \ref{OddEx}.
\end{proof}

\begin{rem}
\label{MainRem}
Putting the results of Theorem \ref{OddMain} into \eqref{Comp4},
we get the exact sequence
\begin{equation}
\label{OddIsom}
0 \to \bbK _1 (C^* (\cG)) \cong \bbZ \to \bbK _1 (C ^* (\rG _1) \otimes \cK) \to \bbK _0 (\cK) \cong \bbZ \to \bbK _0 (C^* (\cG)) \cong \bbZ \to 0 ,
\end{equation}
which implies the inclusion form $\bbK _0 (\cK) $ to $\bbK _0 (C ^* (\cG))$ is an isomorphism.
The commutative diagram \eqref{Surj} now reads
\begin{equation}
\begin{CD}
\bbK _1 (\fU / C^*(\cG _{\rM _0})) @>>> \bbK _1 (\fU / C^* (\cG)) \\
@VV{\partial {}}V @VV{\partial {}}V  \\
\bbK _0 (C^*(\cG _{\rM _0})) @>{\cong}>> \bbK _0 (C^* (\cG)) 
\end{CD}.
\end{equation}
It follows that unlike the manifold with boundary case, for elliptic Fredholm operators, the $\bbK$ theoretic index {\it is} the Fredholm index.
Moreover, the Fredholm index of such operators only depends on its principal symbol, as opposed to full symbol.
\end{rem}

\subsection{An index formula for the asymptotic Abelian case $\cG = \rM _0 \times \rM _0 \cup \bbR ^q$}
Knowing from Example \ref{OddEx} that $\bbK _0 (C^* (\cG )) \cong \bbZ$,
we turn to compute an index formula.
Here, we shall adopt the method of \cite{Nistor;Perturbed}.

Consider the image of the joint symbol map $\bj _1$ in \eqref{JointSym}.
Using Fourier transform 
$\Psi ^0 (\bar \cG _1) = \Psi ^0 (\bbR ^q)$ is isomorphic to a sub-algebra of $C ^0 (\bar \rA' |_{p_0})$,
the space of continuous functions (matrix valued sections) on the radial compactification of $\rA' |_{p_0}$.
Through this identification, the image of $\bj _1$ lies in the space
$$ C ^0 (S \rA' \cup \bar \rA' |_{p_0})
= \{ (f, f _1) \in C ^0 (S \rA') \oplus C ^0 (\bar \rA' |_{p_0}) : f |_{S \rA' _{p_0}} = f_1 |_{S \rA' _{p_0}} \}.$$
By considering homotopy of the underlying topological spaces,
it follows that every elliptic operator $\varPsi \in \Psi ^0 (\cG)$ with invertible total symbol is homotopic to some
$\varPsi' \in \Psi ^0 (\cG)$,
such that $\varPsi'|_{\cG _1}$ is multiplication by a constant tensor (i.e., the Fourier transform of $\varPsi'|_{\cG _1}$ is constant).

The operator $\varPsi'$ is in turn homotopic to an operator of the form
$$ A + \varPsi'' ,$$
where $A$ is a tensor and $\varPsi'' \in \fU (\rM_0 \times \rM _0) \subset \fU (\cG)$ having compact support.

Through the connecting map of the short exact sequence
$$ 0 \to C ^0 (\rA' |_{\rM _0}) \to C ^0 (\bar \rA') \to C ^0 (S \rA' \cup \bar \rA' |_{p_0}) \to 0 ,$$
the joint symbol $\bj _1 (A + \varPsi'') \in  C ^0 (S \rA' \cup \bar \rA' |_{p_0})$ defines a class 
$\partial [\bj_1 (A + \varPsi'')] \in \bbK _0 (C ^0 (\rA' |_{\rM _0})) $.
Using the anchor map $\nu$ to identify $\rA' |_{\rM_0}$ with $T^* \rM_0$, 
the computation of the Fredholm index of $A + \varPsi''$ is then reduced to the compactly supported Atiyah-Singer index theorem:
\begin{equation}
\label{FredholmAS}
\partial [A + \varPsi''] = (-1)^{\dim \rM} \langle \ch _0 (\partial [\bj_1 (A + \varPsi'')] ) \wp ^* \td (T _\bbC \rM _0 ) , [T \rM _0] \rangle,
\end{equation}
where
$\ch _0 $ denotes the even Chern character, $\td (T _\bbC \rM _0 )$ denotes the Todd class, 
and $[T \rM _0]$ denotes the fundamental class.

We turn to the $\bbK$-theoretic index for general elliptic operators.
We suppose further that $\rA$ is an oriented manifold.
\begin{thm}
\label{TopIndex}
Let $\varPsi$ be a 0-order elliptic operator.
Then
$$ \partial [\varPsi] = (-1)^{\dim \rM} \langle \ch _0 [\sigma (\varPsi)] \wp ^* \td (\rA _\bbC ) , [\rA |_{\rM _0}] \rangle.$$
\end{thm}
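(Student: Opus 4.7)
The plan is to reduce Theorem \ref{TopIndex} to the Fredholm index formula \eqref{FredholmAS} already derived above. The key input is Remark \ref{MainRem}: because $q > 1$ is odd, the natural map $\bbK_0(\cK) \to \bbK_0(C^*(\cG))$ is an isomorphism, so the $\bbK$-theoretic index of any Fredholm operator coincides with its Fredholm index and depends only on the principal symbol.

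Given an arbitrary 0-order elliptic $\varPsi$, the first step will be to produce a fully elliptic $\tilde\varPsi$ with $[\tilde\varPsi] = [\varPsi]$ in $\bbK_1(\fU/C^*(\cG))$ and the same principal symbol. Such a lift exists because, in diagram \eqref{Surj}, the obstruction lies in $\bbK_0(C^*(\bar\cG_1)) \cong \bbK_0(\cK \otimes C^*(\bbR^q))$, and this group vanishes for odd $q$ by Connes' Thom isomorphism. By Remark \ref{MainRem}, one then has $\partial[\varPsi] = \partial[\tilde\varPsi]$ equal to the Fredholm index of $\tilde\varPsi$ transported through $\bbK_0(\cK) \cong \bbK_0(C^*(\cG))$. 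Next I would apply the sequence of homotopies sketched preceding the theorem to replace $\tilde\varPsi$ by a model $A + \varPsi''$ with $A$ a constant tensor in the $\cG_1$-directions and $\varPsi'' \in \fU(\rM_0 \times \rM_0)$ compactly supported, and invoke \eqref{FredholmAS}.

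The remaining and most delicate step, which I expect to be the main obstacle, is to match the topological expression produced for $\tilde\varPsi$ with the one claimed for $\varPsi$. Concretely, one must verify that the connecting class $\partial[\bj_1(A + \varPsi'')] \in \bbK_0(C^0(\rA'|_{\rM_0}))$ agrees with the image of $[\sigma(\varPsi)]$ under the boundary map of $0 \to C^0(\rA'|_{\rM_0}) \to C^0(\bar\rA') \to C^0(S\rA' \cup \bar\rA'|_{p_0}) \to 0$, and that $\wp^*\td(T_\bbC\rM_0)$ appearing in \eqref{FredholmAS} matches the restriction of $\wp^*\td(\rA_\bbC)$ to $\rA|_{\rM_0}$ under the anchor isomorphism $\rA|_{\rM_0} \cong T\rM_0$. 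Once these identifications are in place, the asserted formula will follow from naturality of the Chern character and homotopy invariance; the difficulty is the bookkeeping of the classes through the chain of homotopies and compactifications, since $\rA'|_{\rM_0}$ is non-compact and $[\sigma(\varPsi)]$ naturally lives on the full sphere bundle $S\rA'$, which simultaneously sees both orbits $\rM_0$ and $\rM_1$.
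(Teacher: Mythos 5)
Your proposal is correct and follows essentially the same route as the paper's own proof: replace $\varPsi$ by a Fredholm representative with the same principal symbol (justified by the vanishing of $\bbK _0 (C^* (\bar \cG _1))$ for odd $q$ and Remark \ref{MainRem}), homotope to a model operator $A + \varPsi''$, apply the Fredholm formula \eqref{FredholmAS}, and transfer the answer by naturality of the Chern character. The final bookkeeping you flag as the delicate step is exactly what the paper does, via the morphism from the exact sequence $0 \to C^0 (T^* \rM _0) \to C^0 (\bar \rA ') \to C^0 (S \rA ' \cup \bar \rA '|_{p_0}) \to 0$ to $0 \to C^0 (\rA ') \to C^0 (\bar \rA ') \to C^0 (S \rA ') \to 0$, which identifies $\partial [\sigma (A + \varPsi'')]$ with $\iota _* \partial [\bj _1 (A + \varPsi'')]$ and matches the Todd classes under the anchor identification.
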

\begin{proof}
There exists an operator of the form $A + \varPsi''$ as in \eqref{FredholmAS} such that 
\begin{align*}
\partial [\sigma (\varPsi )] & = \partial [\sigma (A + \varPsi'')] & \in \bbK _0 (C^0 (\rA)), \\
\partial [\varPsi ] & = \partial [A + \varPsi''] & \in \bbK _0 (C^* (\cG)).
\end{align*}
The principal and joint symbol of $A + \varPsi''$ is related by the morphism of short exact sequences
$$
\begin{CD}
0 \to C^0 (T^* \rM _0 ) @>>> C^0 (\bar \rA') @>>> C^0 (S \rA' \cup \bar \rA' |_{p_0}) \to 0 \\
@VV{\iota}V @| @VVV \\
0 \to C^0 (\rA') @>>> C^0 (\bar \rA') @>>> C^0 (S \rA') \to 0
\end{CD}
$$
which implies 
$$ \partial [\sigma (A + \varPsi'')] = \partial [\bj _1 ((A + \varPsi'')|_{S \rA'})] = \iota _* \partial [\bj _1 (A + \varPsi'')]. $$
By assumption both the Todd class and orientation of $T \rM _0$ can be made to be induced by $\rA$.
It follows that
$$ \langle \ch _0 (\partial [\bj_1 (A + \varPsi'')] ) \wp ^* \td (T _\bbC \rM _0 ) , [T \rM _0] \rangle 
= \langle \iota \ch _0 (\partial [\bj_1 (A + \varPsi'')] ) \wp ^* \td (\rA _\bbC ) , [\rA |_{\rM _0}] \rangle .$$
Hence the theorem follows from the naturality of the Chern character and Equation \eqref{FredholmAS}.
\end{proof}

\subsection{Index formula using renormalized trace}
\label{OddRenorm}
If $\dim \rM _1 \geq 1$, the arguments in the previous section does not work.
Instead, we adopt the renormalization arguments used in \cite{Nistor;Hom2}, under additional assumptions.

Let $r := d (\cdot , \rM _1) $ be the Riemannian distance function on $\rM$ with respect to some Riemannian metric.
We assume that there is are coordinate charts $(U _\alpha , \bx _\alpha )$ such that 
$ \cup _\alpha U _\alpha \supset \rM _1 $ and on each $U _\alpha $ the image of the anchor map is spanned by
\begin{equation}
\label{StructuralVF}
r ^N\frac{\partial }{\partial x _1} , \cdots, r^N \frac{\partial }{\partial x _q},
\frac {\partial }{\partial y_1} , \cdots, \frac {\partial }{\partial y_{\dim \rM _1}} .
\end{equation}
Note that these vector fields are smooth at $0$. That implies $N $ is necessarily even.
In cylindrical coordinates the structural vector fields is spanned by
$$ r ^N \frac{\partial }{\partial r} , r ^{N -1} \frac{\partial}{\partial \theta _1}, \cdots, r ^{N -1} \frac{\partial}{\partial \theta _{q-1}},
\frac {\partial }{\partial y_1} , \cdots, \frac {\partial }{\partial y_{\dim \rM _1}}. $$

The Riemannian density of the induced metric on $\rM _0$ is of the form 
$$ r ^{- q N} \mu _\rM $$
for some smooth density $\mu _\rM$ on $\rM$ restricted to $\rM _0$.
Without loss of generality we shall assume $\mu _\rM$ is the Lebesgue measure with respect to each $\bx _\alpha$.

By the same arguments as \cite[Lemma 7]{Nistor;GeomOp} we have:
\begin{lem}
For any boundary groupoid of the form $\cG = (\rM _0 \times \rM _0) \cup (\bbR ^q \times \rM _1 \times \rM _1)$,
$$ C^* (\cG) \cong C^* _r (\cG).$$
\end{lem}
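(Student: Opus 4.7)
The plan is to adapt the argument of \cite[Lemma 7]{Nistor;GeomOp} by exploiting amenability through the composition series. The underlying reason the full and reduced norms coincide is that $\cG$ has only two orbits, with amenable isotropy groups (trivial over $\rM_0$, and $\bbR^q$ over $\rM_1$). I want to leverage this via a five-lemma argument on the commutative diagram whose rows are the full and reduced versions of the ideal-quotient sequence coming from the closed invariant submanifold $\rM_1 \subset \rM$.

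First I would write down the short exact sequence from Lemma \ref{Comp}:
$$ 0 \to C^*(\cG_{\rM_0}) \to C^*(\cG) \to C^*(\bar\cG_1) \to 0,$$
and identify the outer terms. On the left, $\cG_{\rM_0} = \rM_0 \times \rM_0$ is a pair groupoid, so $C^*(\cG_{\rM_0}) \cong \cK \cong C^*_r(\cG_{\rM_0})$ automatically. On the right, $\bar\cG_1 = \bbR^q \times \rM_1 \times \rM_1$, and by \cite[Proposition 2]{Nistor;GeomOp} already cited in Section 3 one has $C^*(\bar\cG_1) \cong \cK \otimes C^*(\bbR^q)$; since $\bbR^q$ is abelian hence amenable, $C^*(\bbR^q) = C^*_r(\bbR^q) \cong C_0(\bbR^q)$, so $C^*(\bar\cG_1) = C^*_r(\bar\cG_1)$ as well.

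Next I would establish the analogous short exact sequence for the reduced completions,
$$ 0 \to C^*_r(\cG_{\rM_0}) \to C^*_r(\cG) \to C^*_r(\bar\cG_1) \to 0.$$
Exactness on the left is clear, since $\rM_0 \subset \rM$ is open and invariant, so extension by zero of kernels defines an injection and the reduced norm restricts correctly. Exactness on the right reduces to showing that the restriction map is well defined and surjective in the reduced norm; this follows from the standard fact that for an open invariant subset, restriction intertwines the regular representations on $\bs$-fibers over $\rM_1$ with the regular representations of $\bar\cG_1$, combined with the fact that compactly supported smooth kernels on $\bar\cG_1$ can be extended to $\cG$ by a cutoff and the extension maps continuously in the reduced norm.

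Then I would place the canonical quotient $C^*(\cG) \to C^*_r(\cG)$ into a commutative diagram between the two exact sequences. The outer vertical maps are identities by the identifications above, so the five-lemma yields that the middle map is an isomorphism, which is the claim. The main obstacle I anticipate is justifying the exactness (especially surjectivity at the right) of the reduced sequence; reduced $C^*$-algebras do not in general fit into short exact sequences, but here the amenability of the single nontrivial isotropy group $\bbR^q$ and the explicit product structure $\bar\cG_1 = \bbR^q \times \rM_1 \times \rM_1$ force the reduced norm to agree with the full norm at each stage, making the diagram chase go through.
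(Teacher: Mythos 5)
Your proposal is correct and follows essentially the same route as the paper, which simply invokes the argument of \cite[Lemma 7]{Nistor;GeomOp}: a diagram chase between the full and reduced extensions associated to the invariant decomposition $\rM = \rM_0 \cup \rM_1$, using amenability of the pair groupoid and of $\bbR^q \times \rM_1 \times \rM_1$ to identify the outer vertical maps. The only remark worth adding is that the delicate point you flag (middle-exactness of the reduced sequence) is not actually needed: injectivity of $C^*(\cG) \to C^*_r(\cG)$ already follows from a direct chase using only the injectivity of $C^*_r(\cG_{\rM_0}) \to C^*_r(\cG)$, the isomorphisms at the two ends, and commutativity.
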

Therefore by Lemma \ref{FunctCal},
the space of Schwartz kernels $\varPsi ^0 (\cG , \rE) \oplus S ^* (\cG)$ is also closed under holomorphic functional calculus in $\fU^* (\cG)$.
In particular, $\cup _m \varPsi ^m (\cG, \rE) \oplus S ^* (\cG)$ satisfies all axioms in \cite[(i)-(vii),($\sigma$),($\psi$)]{Nistor;CplxPwr}.
It follows from \cite[Theorem 7.2]{Nistor;CplxPwr} that
for any uniformly supported elliptic, positive definite operator $Q$ of order 1 
(for example $Q = (\varDelta + I) ^{\frac{1}{2}}$, see \cite{Nistor;GeomOp}),
the complex powers $Q ^z$ are well defined as pseudo-differential operators lying in 
$\varPsi ^{-z} (\cG , \rE) \oplus \cS (\cG)$. 

Given any $\varPsi \in \Psi ^m (\cG)$, 
$\varPsi Q ^{- \tau} $ is given by a sufficiently smooth kernel whenever the real part of $\tau$ is sufficiently large.
Hence it makes sense to consider
$$ Z(\varPsi , \tau, z) := \int _{\rM _0} \tr (r ^{z - q N} \varPsi Q ^{- \tau} ) \mu _\rM , \quad \tau, z \in \bbC$$
(where we regard $r ^{z - q N}$ as the operator defined by multiplying by a scalar function).
By \cite[Lemma 1]{Nistor;Hom2}, $Z (\varPsi , \tau, z)$ is a meromorphic function of $\tau $ and $z$, 
with at most simple pole at $\tau = z = 0 $.
Hence one defines:
\begin{dfn}
Given a holomorphic family $A (\tau , z)$ of pseudo-differential operators, its renormalized trace is defined to be
$$ \widehat \tr (A) := \frac{\partial ^2}{\partial \tau \partial z} \big|_{\tau = 0, z = 0} 
\int _{\rM _0} \tau z \tr \big(r ^{z - q N} A (\tau , z) Q ^{- \tau} \big |_{\rM } \big) \mu _\rM  .$$
\end{dfn}

In view of Remark \ref{MainRem}, to derive an index formula it suffices to consider the Fredholm index.
Suppose now that $D \in \rD ^1 (\cG , \rE)$ is a fully elliptic, order 1 differential operator.
There exists a pseudo-differential parametrix $\varPhi$ of $D$ 
up to finite rank operators and vanishing up to infinite order at $\cG _1$ \cite{So;FullCal}.
By the Cauldron formula, the Fredholm index of $D$ equals
\begin{equation}
\label{Cauldron}
\int _{\rM _0} r ^{-q N} \tr ([D, \varPhi]) \mu _\rM = \widehat \tr ([D, \varPhi]) .
\end{equation}
Following the arguments of \cite[Proposition 8]{Nistor;Hom2}, 
we have by direct calculation
\begin{equation}
\label{ResEq}
r ^z [D, \varPhi] Q ^{- \tau} = (r ^z D r ^{- z} - D) r ^z \varPhi Q ^{- \tau}
+ r ^z \varPhi (Q ^{- \tau} D Q ^\tau - D) Q ^{- \tau} + [D, r ^z \varPhi Q ^{- \tau}].
\end{equation}
The last term of \eqref{ResEq} has vanishing trace. 
Consider the first term of \eqref{ResEq} (this corresponds to the $\eta$ term in \cite[Equation (18)]{Nistor;Hom2}.
However, unlike the manifold with boundary case, we have:
\begin{lem}
\label{OddEta}
The renormalized trace
\begin{equation}
\frac{\partial ^2}{\partial \tau \partial z} \big|_{\tau = 0, z = 0} 
\tau z \int _{\rM _0} \tr ( r ^{- q N} (r ^z D r ^{- z} - D) r ^z \varPhi Q ^{- \tau}) \mu _\rM  = 0 .
\end{equation}
\end{lem}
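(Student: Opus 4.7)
The plan is to exploit the simple algebraic identity $(r^z D r^{-z} - D)r^z = r^z D - D r^z = [r^z, D]$, compute the commutator explicitly in the local coordinates of \eqref{StructuralVF}, and then show that the residue of the resulting meromorphic integral at $z=0$ vanishes by a parity argument on the sphere $S^{q-1}$.

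First I would write $D$ locally as a combination of the structural vector fields $r^N \partial/\partial x_j$ and $\partial/\partial y_k$ with smooth coefficients. Since $r$ is independent of $y$ and $\partial r/\partial x_j = x_j / r$, a short calculation yields
\[ [r^z, D] = -z \sum_j a_j \, r^{N + z - 2} x_j \]
as a (zeroth-order) multiplication operator, where $a_j$ denotes the coefficient of $r^N \partial/\partial x_j$ in $D$. Substituting back, the integrand in the lemma becomes $-z \sum_j r^{z - (q-1)N - 2} x_j a_j \tr_{\mathrm{fiber}} K_{\Phi Q^{-\tau}}(x,x)$, up to a contribution supported away from $\rM_1$ which is manifestly holomorphic at $z = 0$.

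Next I would pass to cylindrical coordinates $x = r\omega$ with $\omega \in S^{q-1}$ and $\mu_\rM = r^{q-1}\, dr\, d\omega\, dy$; the exponent of $r$ then reorganizes to $z - (q-1)(N-1) - 1$. Because $\Psi^0(\cG,\rE) \oplus S^*(\cG)$ is closed under holomorphic functional calculus (Lemma \ref{FunctCal}), the diagonal restriction $K_{\Phi Q^{-\tau}}|_\rM$ extends smoothly across $\rM_1$ for $\mathrm{Re}(\tau)$ large, so I can Taylor expand $a_j \tr K$ in $x$ about $\rM_1$. A standard Mellin-transform argument then identifies $\mathrm{Res}_{z=0}$ as arising solely from the Taylor monomial of degree $M := (q-1)(N-1)$, and this residue reduces to a finite sum of angular integrals of the form $\int_{S^{q-1}} \omega_j \omega^\alpha\, d\omega$ with $|\alpha| = M$.

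The decisive step is a parity argument: since $q$ is odd, $q-1$ is even, so $M$ is even and hence $|\alpha + e_j| = M+1$ is odd; because $q \geq 2$, the antipodal involution $\omega \mapsto -\omega$ acts on $S^{q-1}$ and forces every odd-degree spherical monomial to integrate to zero. Thus the residue at $z = 0$ vanishes identically in $\tau$, and unpacking the definition of the renormalized trace (after the explicit factor $-z$ supplied by $[r^z, D]$, the operation $\partial^2_{\tau z}|_0 (\tau z \,\cdot)$ extracts precisely the $\tau^0 z^{-1}$ Laurent coefficient of the remaining integral) delivers the stated vanishing. The main obstacle is bookkeeping the simultaneous pole structure in $\tau$ and $z$ and justifying the interchange of Taylor expansion with meromorphic continuation; conceptually, this is the odd-dimensional counterpart of \cite[Proposition 8]{Nistor;Hom2}, where the analogous $\eta$-type term is nonzero precisely because the manifold-with-boundary case ($q=1$, one-sided normal) has no full sphere on which parity can be invoked.
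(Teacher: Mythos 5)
Your proposal is correct and follows essentially the same route as the paper's proof: localize near $\rM_1$, observe that only the normal-derivative part of $D$ fails to commute with $r^z$ (producing an explicit factor of $z$), identify the residue at $z=0$ with the Taylor coefficient of degree $(q-1)(N-1)$, and kill it by the antipodal parity argument on $S^{q-1}$ using that $q$ is odd. The only difference is cosmetic --- you track the commutator in Cartesian coordinates and phrase the parity as the vanishing of odd-degree spherical monomials $\int_{S^{q-1}}\omega_j\omega^\alpha\,d\omega$, whereas the paper works in cylindrical coordinates and encodes the same fact as an antipodal-equivariance condition on the Taylor coefficients $F_j$.
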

\begin{proof}
The proof reduces to a local one by considering partition of unity.
We parameterize each $U _\alpha$ using cylindrical coordinates and let $\sigma$ be the cylindrical anti-pole map, i.e.,
$$ (r , \sigma (\theta ), y ) := (- (r, \theta), y) .$$

By assumption $D$ is the composition of one of the vector fields in \eqref{StructuralVF} and tensors.
Taking local trivialization, one writes $D$ in cylindrical coordinates:
\begin{align*}
D = & D _0 (r, \theta, y) r ^N \frac{\partial }{\partial r} +
D _1 (r, \theta, y) r ^{N -1} \frac{\partial}{\partial \theta _1} + \cdots
+ D _{q-1} (r, \theta, y) r ^{N -1} \frac{\partial}{\partial \theta _{q-1}} \\
& + D' _1 (r, \theta, y) \frac{\partial }{\partial y_1} + \cdots + D' _{\dim \rM _1} (r, \theta, y) \frac{\partial }{\partial y_{\dim \rM _1}} + D''(r, \theta, y).
\end{align*}
Consider $(r ^z D r ^{- z} - D) r ^z$. One has
$$ \big[ r ^N \frac{\partial}{\partial r} , r ^z \big] = z r ^{z + N - 1},$$
which implies 
$$ \big(r ^z (r ^N \frac{\partial}{\partial r}) r ^{- z} - r ^N \frac{\partial}{\partial r} \big) r ^z \varPhi Q ^{- \tau} 
= z r ^{z + N -1} \varPhi Q ^{- \tau};$$
while other commutators vanish.
It follows that
$$ (r ^z D r ^{- z} - D) r ^z \varPhi Q ^{- \tau} = z r ^{z-1} r ^N D _0 \varPhi Q ^{- \tau} .$$
Expand $D _0 $ as a Taylor series 
$$ D _0 = A _0 (\theta, y) + r A _1 (\theta, y) + \cdots ,$$
for some matrices $A _j (\theta, y)$.
Since $D _0 r ^N \frac{\partial }{\partial r}$ is a smooth differential operator at $\{ r = 0 \}$, it follows that
$$ A _j (\sigma (\theta), y) = - (-1) ^j A _j (\theta , y), \quad j=0, 1, 2, \cdots .$$
Similarly, for any $\tau$ with sufficiently large real part, 
$\varPhi Q ^{- \tau}$ is given by a sufficiently smooth kernel.
It follows that $\tr (D _0 \varPhi Q ^{- \tau})$ has Taylor expansion with respect to $r$:
$$\tr (D _0 \varPhi Q ^{- \tau}) = F _0 (\theta , y) + F _1 (\theta , y) r + \cdots .$$
satisfying
\begin{equation}
\label{Parity1}
F _j (\sigma (\theta), y) = -(-1) ^j F _j (\theta , y), \quad j = 0, 1, 2, \cdots .
\end{equation}
In cylindrical coordinates, denoting the volume measure of the $q-1$ dimensional sphere by $d \theta$,
the integral in \eqref{OddEta} becomes
$$ \int z r ^{z-1 + N} r ^{- q N + q - 1} \tr (D _0 \varPhi Q ^{- \tau}) d r d \theta d y .$$
One replaces the trace factor by its Taylor expansion, integrate with respect to $r$ and take the constant term to get:
\begin{equation}
\label{PwrInt}
\int F _{(N-1)(q-1)} (\theta , y) d \theta d y.
\end{equation} 
Since $(q - 1)(N - 1) $ is even, it follows from parity \eqref{Parity1} that the $\theta$ integral in \eqref{PwrInt} also vanishes 
for all $\tau$ with sufficiently positive real part. 
Hence the lemma follows by meromorphic extension.
\end{proof}

It remains to consider the second term of \eqref{ResEq}.
This term has been computed in \cite[Proposition 12]{Nistor;Hom2}, and we shall briefly recall the result here.
\begin{lem}
One has
$$ \widehat \tr (\varPhi (Q ^{- \tau} D Q ^\tau - D) Q ^{- \tau}) = \int _{\rM _0} r ^{z - q N} a _0 \mu _\rM \Big| _{z = 0},$$
where $a _0 $ is the constant term in the $t \to 0 $ asymptotic expansion of the trace of heat kernel 
$$\tr \big( (e ^{-t D' (D') ^*} + e ^{-t (D') ^* D'}) \big|_\rM \big).$$
\end{lem}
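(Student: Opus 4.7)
The plan is to recall the computation of \cite[Proposition 12]{Nistor;Hom2} essentially verbatim, adapted to the present groupoid setting. The core technical tool is the Mellin identity
$$ Q^{-\tau} = \frac{1}{\Gamma(\tau/2)} \int_0^\infty t^{\tau/2 - 1} e^{-t Q^2} \, dt,$$
valid for $\mathrm{Re}(\tau) \gg 0$ and extending meromorphically in $\tau$ by standard spectral considerations. This identity converts complex powers of $Q$ into heat semigroup integrals, which is what ultimately makes the heat coefficient $a_0$ accessible.

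First I would rewrite $Q^{-\tau} D Q^\tau - D = -Q^{-\tau}[D, Q^\tau]$, substitute the Mellin representations for $Q^{\pm\tau}$, and apply Duhamel's formula
$$ [D, e^{-t Q^2}] = - \int_0^t e^{-(t-s) Q^2} [D, Q^2] e^{-s Q^2} \, ds $$
to unfold the inner commutator. Under the trace, after cyclic rearrangements that become legitimate once the $r^z$ factor in the definition of $\widehat{\tr}$ renders everything trace class, and after regrouping using the parametrix identity $D \varPhi \equiv \id$ modulo smoothing and finite-rank remainders, the integrand reorganizes into a combination of heat traces that telescope into $\tr\bigl((e^{-t D'(D')^*} + e^{-t (D')^* D'})|_\rM\bigr)$.

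The final step is pole extraction. The factor $\tau$ in the definition of $\widehat{\tr}$ is precisely calibrated to cancel the $1/\Gamma(\tau/2)$ zero at $\tau = 0$, so that $\partial_\tau|_{\tau = 0}$ of the Mellin integral extracts the constant term in the short-time expansion of the heat trace, which is $a_0$ by definition. The outer $z$-regularization then integrates this pointwise coefficient against the renormalized density $r^{z - qN} \mu_\rM$, and taking $\partial_z|_{z = 0}$ of $z$ times the resulting integral yields the stated right-hand side.

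The main obstacle is the joint meromorphic bookkeeping in $(\tau, z)$ combined with the trace-class justifications needed at each cyclic rearrangement: none of these are automatic, and they constitute most of the work in \cite{Nistor;Hom2}. The only genuinely new input in our setting is to invoke Lemma \ref{FunctCal} together with the polynomial volume growth of $\cG$, which together ensure that complex powers and heat kernels of $Q$ lie in $\varPsi^{\bullet}(\cG, \rE) \oplus S^*(\cG)$, so that the trace-class properties and meromorphic continuations exploited by Nistor remain valid verbatim in the boundary groupoid setting of $\cG$.
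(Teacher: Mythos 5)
Your outline has the right global shape (Mellin transform, short-time heat-trace asymptotics, pole extraction), but the central step --- ``after cyclic rearrangements \dots the integrand reorganizes into a combination of heat traces that telescope'' --- is precisely where the content of the lemma lies, and as written it is a gap rather than an argument. The paper does not use Duhamel's formula at all. It \emph{fixes} the auxiliary operator and the parametrix to be $Q := Q_1 := (DD^* + \varPi_{\ker(D^*)})^{1/2}$, $Q_2 := (D^*D + \varPi_{\ker(D)})^{1/2}$ and $\varPhi := D^*Q_1^{-2} = Q_2^{-2}D^*$, and then exploits the exact intertwining identity $Q_1^{-\tau}D = DQ_2^{-\tau}$, which collapses the whole expression algebraically: $\varPhi(Q^{-\tau}DQ^{\tau}-D)Q^{-\tau} = \varPhi D(Q_2^{-\tau}-Q_1^{-\tau})$, whence $\widehat\tr(\varPhi(Q^{-\tau}DQ^{\tau}-D)Q^{-\tau}) = \widehat\tr(Q_2^{-\tau}-Q_1^{-\tau})$. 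Only at this point does the Mellin transform enter, applied to a \emph{difference} of two complex powers whose heat semigroups are known explicitly ($e^{-t(DD^*+\varPi_{\ker(D^*)})} = e^{-tDD^*} + (e^{-t}-1)\varPi_{\ker(D^*)}$, and similarly for $Q_2$), so the projection corrections cancel and the coefficient $a_0$ drops out. The observation that $\tau^{-1}(Q_2^{-\tau}-Q_1^{-\tau})$ is a holomorphic family, so that the double derivative in the definition of $\widehat\tr$ degenerates to an evaluation of constant terms, is what replaces your heuristic that ``the factor $\tau$ is calibrated to cancel the zero of $1/\Gamma(\tau/2)$.''

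Two concrete points would have to be repaired in your route. First, for a general positive elliptic order-one $Q$ the commutator $[D,Q^2]$ appearing in Duhamel's formula bears no useful relation to $DD^*$ and $D^*D$ (indeed $[D,Q^\tau]$ does not even typecheck unless one passes to the graded operator), so there is no reason the expansion telescopes into the stated heat traces; the lemma as proved is really a statement about one particular admissible choice of $(Q,\varPhi)$, which suffices because the quantity being computed in the decomposition \eqref{ResEq} does not depend on that choice. Second, even granting a good choice of $Q$, your appeal to $D\varPhi \equiv \id$ modulo smoothing and finite-rank remainders leaves uncontrolled error terms inside a $t$-integral near $t=0$, exactly where the asymptotic expansion is extracted; the paper's exact identity $\varPhi D = \id - \varPi_{\ker(D)}$ avoids any such remainder. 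I would restructure the proof around the intertwining relation rather than Duhamel's formula.
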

\begin{proof}
We fix 
\begin{align*}
Q := Q _1 := & (D D ^* + \varPi _{\ker (D^*)}) ^{\frac{1}{2}} \\
Q _2 := & (D ^* D + \varPi _{\ker (D)}) ^{\frac{1}{2}} \\
\varPhi := & D ^* Q _1 ^{-2} = Q _2 ^{-2} D ^*,
\end{align*}
where $\varPi _{\ker (D)}, \varPi _{\ker (D^*)} $ respectively denote orthogonal projection (in $L ^2 (\rM _0)$) onto $\ker (D)$, and $\ker (D^*)$.
Note that for any $\tau \in \bbC$,
$$ Q _1 ^ {-\tau } D = D Q _2 ^{- \tau }.$$
It follows that for any $\tau, z \in \bbC$:
\begin{align*}
\varPhi (Q ^{- \tau} D Q ^\tau - D) Q ^{- \tau}
= & \: \varPhi (Q _1 ^{- \tau } D - D Q _1 ^{- \tau}) \\
= & \: \varPhi D (Q _2 ^{- \tau } - Q _1 ^{- \tau}) \\
\widehat \tr (\varPhi (Q ^{- \tau} D Q ^\tau - D) Q ^{- \tau}) 
= & \widehat \tr (Q _2 ^{- \tau } - Q _1 ^{- \tau}).
\end{align*}
Since $\tau ^{-1} (Q _2 ^{- \tau } - Q _1 ^{- \tau})$ is a holomorphic family, 
by \cite[Lemma 1]{Nistor;Hom2}, 
$\int _{\rM _0} \tr (r ^{z - q N} (Q _2 ^{- \tau } - Q _1 ^{- \tau})) \mu _\rM$ is holomorphic at $\tau = 0$.
Using the Mellin transform one gets for any $\tau $ with sufficiently large real part
$$ \tr (Q _2 ^{- \tau } - Q _1 ^{- \tau} |_\rM)
= \frac{1}{\Gamma (\tau / 2)} \int _0 ^\infty t ^{\frac{\tau}{2} - 1} 
\tr \big( e ^{- t (D D ^* + \varPi _{\ker (D^*)}) } - e ^{- t (D ^* D + \varPi _{\ker (D^*)}) } |_\rM \big) d t .$$
Since 
\begin{align*}
e ^{- t (D D ^* + \varPi _{\ker (D^*)}) } = & e ^{- t D D ^* } + (e ^t - 1) \varPi _{\ker (D^*) } \\
e ^{- t (D ^* D + \varPi _{\ker (D^*)}) } = & e ^{- t D ^* D} + (e ^t - 1) \varPi _{\ker (D^*) },
\end{align*}
the claim follows.
\end{proof}

Summarizing the calculations above, we obtain:
\begin{thm}
\label{RenormIndex}
For any elliptic, differential operator $D \in \rD ^1 (\cG , \rE)$,
the index of $D$ in $\bbK _0 (C^* (\cG))$ equals
\begin{equation}
\partial ([D (D D^* + \id )^{- \frac{1}{2}} ])
= \int _{\rM _0} r ^{z - q N} a _0 \mu _\rM \Big |_{z = 0},
\end{equation}
where $a _0 $ is the constant term in the $t \to 0 $ asymptotic expansion of the trace of heat kernel.
\end{thm}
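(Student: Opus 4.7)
The plan is to assemble the formula from the three preceding pieces already in place. By Remark \ref{MainRem}, the $\bbK$-theoretic index coincides (under the natural isomorphism $\bbK_0(\cK) \to \bbK_0(C^*(\cG))$) with the Fredholm index, so it suffices to compute the latter. Since $D$ is fully elliptic of order $1$, a pseudo-differential parametrix $\varPhi$ exists modulo finite rank operators vanishing to infinite order at $\cG_1$ (by \cite{So;FullCal}), and the Calderón formula \eqref{Cauldron} identifies the Fredholm index with the renormalized trace $\widehat{\tr}([D, \varPhi])$.

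Next, I would insert the operator identity \eqref{ResEq} and treat the three summands separately. The third summand $[D, r^z \varPhi Q^{-\tau}]$ is a commutator whose $L^2$ trace vanishes identically for $\mathrm{Re}(\tau)$ large, and this vanishing persists under meromorphic extension, so its contribution to $\widehat{\tr}$ is zero. The first summand, which in the manifold-with-boundary setting would produce the $\eta$-type correction, is killed by Lemma \ref{OddEta}: the odd-dimensional sphere integrand picks out a Taylor coefficient with the wrong parity under the antipodal map $\theta \mapsto \sigma(\theta)$, and the spherical integral vanishes. This is the structural feature that distinguishes the $q>1$ odd case from the classical $b$-calculus and allows us to drop the non-local piece.

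For the second summand $r^z \varPhi(Q^{-\tau} D Q^{\tau} - D) Q^{-\tau}$, I would invoke the identity $Q_1^{-\tau}D = D Q_2^{-\tau}$ (for the two natural Laplace-type operators $Q_1, Q_2$ built from $D D^*$ and $D^* D$) to reduce the renormalized trace to $\widehat{\tr}(Q_2^{-\tau} - Q_1^{-\tau})$. A Mellin transform converts this into a heat kernel expression, and after accounting for the projections onto $\ker(D)$ and $\ker(D^*)$ (which contribute equally and cancel under the meromorphic continuation, leaving only the difference of heat traces), one extracts the constant term $a_0$ in the short-time asymptotic expansion, localized by the factor $r^{z - qN}$.

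Adding the three contributions gives the stated formula. The main obstacle, insofar as any remains after the preparatory lemmas, is bookkeeping the meromorphic structure: one must verify that each of the three terms separately admits a meromorphic continuation with at worst a simple pole at $(\tau, z) = (0, 0)$ so that the double residue $\frac{\partial^2}{\partial \tau \partial z}|_{0,0}(\tau z \, \cdot \,)$ is well-defined on each piece, and that summing them before or after extraction of the pole yields the same answer. This is guaranteed by \cite[Lemma 1]{Nistor;Hom2} applied to each summand, so the combination is legitimate and the theorem follows.
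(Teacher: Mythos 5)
Your proposal follows the paper's own route exactly: the reduction via Remark \ref{MainRem} to the Fredholm index, the parametrix and formula \eqref{Cauldron}, the three-term decomposition \eqref{ResEq} with the commutator term traceless, Lemma \ref{OddEta} killing the $\eta$-type term by parity, and the heat-kernel lemma evaluating the remaining term as $a_0$. The paper's proof of the theorem is precisely this assembly of the preceding lemmas, so your argument is correct and essentially identical to it.
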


Note that when $\rM _1$ is a point, Theorem \ref{RenormIndex} is the same as Theorem \ref{TopIndex}.

\section{The $q = 1$ case}
We turn to consider boundary groupoids with dimension 1 isotropy subgroups.

\subsection{Connected manifold with connected boundary}
Let $\rM = \rM _0 \cup \rM _1$ be a connected manifold with boundary,
where we denote by $\rM _0$ and $\rM _1$ respectively the interior and boundary of $\rM$.
We assume $\rM _1$ is also connected.
Then 
$$\cG = (\rM _0 \times \rM _0 ) \cup (\bbR \times \rM _1 \times \rM _1).$$
By the proof of Lemma \ref{Morita}, $\cG$ is Morita equivalent to some groupoid of the form
$$ \tilde \cG := ((0, 1) \times (0, 1)) \cup \bbR ,$$
and by Lemma \ref{OddClassSp} 
$$\bbK _\bullet (C^* (\tilde \cG)) \cong \bbK _\bullet (C^0 [0, 1)) \cong \{ 0 \} .$$
We conclude that:
\begin{lem}
One has
$$ \bbK _0 (C^* (\cG)) \cong \{ 0 \} , \quad \bbK _1 (C ^* (\cG)) \cong \{ 0 \} ,$$
and the $\bbK$-theoretic index is trivial.
\end{lem}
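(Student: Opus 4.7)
The plan is to follow the two-sentence strategy sketched just before the statement. There are two ingredients: a Morita equivalence identifying $\cG$ with the simple local model $\tilde \cG := ((0,1) \times (0,1)) \cup \bbR$ over the base $[0,1)$, and the computation of $\bbK_\bullet(C^*(\tilde \cG))$ via the analytic-index isomorphism of Lemma \ref{OddClassSp}.

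For the Morita reduction, I would mimic the construction in the proof of Theorem \ref{OddMain}. Pick $p \in \rM_1$ together with a contractible open neighborhood $\rU \subset \rM_1$. Since $\rM_1$ is a connected codimension-one boundary component, a collar of $\rU$ in $\rM$ is diffeomorphic to $\rU \times [0, \epsilon)$, with $\rU$ corresponding to the face $\rU \times \{0\}$. Setting $\rM' := \rU \times [0, \epsilon)$ and $\cG' := \bs^{-1}(\rM') \cap \bt^{-1}(\rM')$, the bibundle argument of Lemma \ref{Morita} shows that $\cG$ and $\cG'$ are Morita equivalent, because the connectedness of $\rM_0$ and $\rM_1$ guarantees that $\rM'$ meets every orbit. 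Following the remainder of the proof of Theorem \ref{OddMain}, one extends the image of the anchor on $\rM'$ to a locally free module of vector fields on a larger model base, integrates the resulting Lie algebroid, and obtains a groupoid of the form $\rU \times \rU \times \tilde \cG$ (the first two factors forming the pair groupoid on $\rU$). Morita equivalence across the pair groupoid $\rU \times \rU$ then collapses this to the model $\tilde \cG$ over $[0,1)$.

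For the $\bbK$-theory, both orbits $(0,1)$ and $\{0\}$ of $\tilde \cG$ are contractible, so the proof of Lemma \ref{OddClassSp} applies verbatim and yields an analytic-index isomorphism $\ind : \bbK_\bullet(C^*(\tilde \rA)) \cong \bbK_\bullet(C^*(\tilde \cG))$, where $\tilde \rA$ is the Lie algebroid of $\tilde \cG$, a (necessarily trivial) rank-one bundle over $[0,1)$. Fourier transform identifies $C^*(\tilde \rA)$ with $C^0([0,1) \times \bbR)$. By Bott periodicity, $\bbK_\bullet$ of this algebra is isomorphic to $\bbK_{\bullet+1}(C^0[0,1))$, and the latter vanishes because $C^0[0,1)$ is the cone on $\bbC$ --- equivalently, $[0,1)^+ = [0,1]$ is contractible. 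Hence $\bbK_\bullet(C^*(\cG)) \cong \bbK_\bullet(C^*(\tilde \cG)) = 0$, and the $\bbK$-theoretic index is trivially zero.

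The main obstacle, compared to the proof of Theorem \ref{OddMain}, is that the compact classifying model used there (a product with $\bbS^q$) is no longer available: the isotropy orbit $\bbR$ extends along only one transverse direction from the boundary, so the natural model base is $[0,1)$, which is non-compact and has a boundary. Constructing and integrating the Lie algebroid $\tilde \rA$ over this base is the step that requires the most care, but it is a direct adaptation of the argument in Theorem \ref{OddMain}; the resulting vanishing of $\bbK_\bullet(C^*(\cG))$ then follows quickly.
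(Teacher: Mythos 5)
Your proposal is correct and follows essentially the same route as the paper, which simply cites the proof of Lemma \ref{Morita} to reduce $\cG$ to the model $\tilde\cG = ((0,1)\times(0,1))\cup\bbR$ over $[0,1)$ and then invokes Lemma \ref{OddClassSp} to get $\bbK_\bullet(C^*(\tilde\cG)) \cong \bbK_\bullet(C^0[0,1)) \cong \{0\}$. You have merely filled in the details (the collar neighborhood, the integration of the extended algebroid, and the cone/suspension argument for the vanishing) that the paper leaves implicit.
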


\begin{rem}
From \eqref{Comp3} we have the exact sequence \cite{Nistor;BoundaryK}
$$ 0 \to \bbK _1 (C^* (\cG)) \to \bbZ \xrightarrow{\partial} \bbZ \to \bbK _0 (C^* (\cG)) .$$
It follows that the connecting map $\partial$ is an isomorphism.
More concretely we have
\begin{equation}
\label{Incidence}
\partial (1) = 1 .
\end{equation}
\end{rem}

Suppose that $\rM _1 = r ^{-1} (0) $ for some defining function $r \in C^\infty (\rM)$,
and furthermore that the structural vector fields near $\rM _1$ are spanned by
$$ r ^N \frac{\partial }{\partial r}, \frac{\partial}{\partial y_1}, \cdots , \frac{\partial}{\partial y _{\dim \rM _1}} $$
for some integer $N$,
then formulas computing the Fredholm index of an elliptic operator $D$ that is invertible on $\cG _1$ is well known ---
one of which is by adapting Section \ref{OddRenorm}. We just briefly recall the result here. 
\begin{lem}
\label{APS}
\cite[Proposition 12, Proposition 13]{Nistor;Hom2}
For any order 1 elliptic differential operator $D$ on the groupoid 
$\cG = (\rM _0 \times \rM _0) \cup (\bbR \times \rM _1 \times \rM _1)$,
that is invertible on $\bbR \times \rM _1 \times \rM _1$,
the Fredholm index of $D (D D^* + \id )^{- \frac{1}{2}}$ equals
\begin{equation}
\partial ([D (D D^* + \id )^{- \frac{1}{2}} ])
= \int _{\rM _0} a _0 \mu _\rM + \eta (D),
\end{equation}
where $a _0 $ is the constant term in $t \to 0 $ asymptotic expansion of the trace of heat kernel 
$$\tr \big( (e ^{-t D D ^*} + e ^{-t D^* D}) \big|_\rM \big),$$
and 
$$\eta (D) := \frac{\partial ^2}{\partial \tau \partial z} \big|_{\tau = 0, z = 0} 
\int _{\rM _0} \tr \big(r ^{z - N} ( D - r ^{-z} D r ^z) ( D D^* + \id )^{-\frac{1}{2} - \tau} \big) \mu _\rM $$ 
depends only on $D |_{\bbR \times \rM _1 \times \rM _1}$.
\end{lem}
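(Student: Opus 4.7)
The plan is to adapt verbatim the renormalized-trace argument used for Theorem \ref{RenormIndex}. First I would pick a pseudo-differential parametrix $\varPhi$ for $D$ that vanishes to infinite order on $\cG _1$ (available by \cite{So;FullCal}), so that the Calder\'on formula writes the Fredholm index as $\widehat{\tr}([D, \varPhi])$, where $\widehat{\tr}$ is the renormalized trace built from $r^{z-N}$ and $Q^{-\tau}$ exactly as in Section \ref{OddRenorm}. Then I would split $r^z [D, \varPhi] Q^{-\tau}$ into the three summands of \eqref{ResEq}. The commutator piece $[D, r^z \varPhi Q^{-\tau}]$ contributes nothing after meromorphic continuation, since the trace of a commutator of trace-class operators vanishes. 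The ``heat'' piece $r^z \varPhi (Q^{-\tau} D Q^\tau - D) Q^{-\tau}$ can be handled exactly as in Theorem \ref{RenormIndex}: with $Q_1 = (DD^* + \varPi_{\ker D^*})^{1/2}$ and $Q_2 = (D^*D + \varPi_{\ker D})^{1/2}$, the intertwining $Q_1^{-\tau} D = D Q_2^{-\tau}$ collapses the renormalized trace to $\widehat{\tr}(Q_2^{-\tau} - Q_1^{-\tau})$, which the Mellin transform identifies with $\int_{\rM_0} a_0 \mu_\rM$.

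The decisive difference from the odd $q > 1$ case is that the analogue of Lemma \ref{OddEta} now fails. Repeating the local calculation in the coordinates $(r, y_1, \dots, y_{\dim \rM _1})$ still yields an expression of the shape
\[(r^z D r^{-z} - D) r^z \varPhi Q^{-\tau} = z\, r^{z - 1 + N} D_0 \varPhi Q^{-\tau},\]
but with $q = 1$ there is no angular variable: the unit ``sphere'' in the normal direction is a single point (or, in the manifold-with-boundary picture, $r$ lies on the one-sided half-line $r \geq 0$), so there is no $\theta$-integration in which the parity relation exploited in Lemma \ref{OddEta} could force cancellation. Consequently the first summand of \eqref{ResEq} survives as a genuine contribution, which after extracting the $\partial_\tau \partial_z|_{0,0}$ coefficient is precisely the quantity $\eta(D)$ displayed in the statement. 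That $\eta(D)$ depends only on $D|_{\bbR \times \rM _1 \times \rM _1}$ follows because $D - r^{-z} D r^z$ vanishes outside any neighbourhood of $\rM _1$ once $z = 0$, so that the residues picked up are determined solely by the Taylor expansion of $D$ at $r = 0$, i.e.\ by the indicial family on $\bbR \times \rM _1 \times \rM _1$.

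The hardest part, and the reason I would cite \cite[Propositions 12, 13]{Nistor;Hom2} rather than redo the meromorphic bookkeeping by hand, is verifying that $Z(\varPsi, \tau, z)$ has only a simple pole at $(\tau, z) = (0, 0)$ so that the iterated residues extracting $\widehat{\tr}$ of each summand are individually well defined and add up to the index on the nose. The ingredients for this are already present in our setting: $\cG$ has polynomial volume growth, $S^*(\cG)$ is closed under holomorphic functional calculus by Lemma \ref{FunctCal}, and the complex powers $Q^z$ lie in $\bigcup_m \Psi^m(\cG, \rE) \oplus S^*(\cG)$ by \cite[Theorem 7.2]{Nistor;CplxPwr}, so the meromorphic structure of $Z$ is inherited from \cite[Lemma 1]{Nistor;Hom2} exactly as in the proof of Theorem \ref{RenormIndex}.
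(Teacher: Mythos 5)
The paper gives no proof of Lemma \ref{APS} at all: it is quoted directly from \cite[Propositions 12, 13]{Nistor;Hom2}, with only the remark that it can be obtained ``by adapting Section \ref{OddRenorm}.'' Your proposal carries out exactly that adaptation --- the same parametrix, the same three-term splitting \eqref{ResEq}, the same Mellin-transform treatment of the heat term, with the correct observation that the parity cancellation of Lemma \ref{OddEta} is unavailable when $q=1$ so the $\eta$ term survives --- so it is consistent with the paper's intended route and is sound in outline.
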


\subsection{Manifold partitioned by an interior hyper-surface}
We turn to our second example.
Let $\rM $ be a compact manifold without boundary, $\rM _1$ is a closed sub-manifold with co-dimension 1.
We suppose $\rM = \rM' _0 \cup \rM _1 \cup \rM'' _0 $,
where $\rM' _0 , \rM'' _0 $ are open and connected.
We furthermore suppose that the closures, $\bar \rM' _0 , \bar \rM'' _0 $ are manifolds with boundary $\rM _1$, 
as in the last sub-section.
In particular, our assumption implies the normal bundle of $\rM _1$ is trivial.

We consider the subspace of vector fields tangential to $\rM _1$.
The corresponding Lie groupoid is
$$ \cG = (\rM' _0 \times \rM' _0) \cup (\rM'' _0 \times \rM'' _0) \cup (\bbR \times \rM _1 \times \rM _1),$$
it contains
$$ \cG' := (\rM' _0 \times \rM' _0) \cup (\bbR \times \rM _1 \times \rM _1),
\quad \cG'' = (\rM'' _0 \times \rM'' _0) \cup (\bbR \times \rM _1 \times \rM _1)$$
as invariant subspaces.

We first compute $\bbK _0 ( C ^* (\cG))$.
For the composition series of $\cG$ we have
$$ C^* ((\rM' _0 \times \rM' _0) \cup (\rM'' _0 \times \rM'' _0)) 
= C^* (\rM' _0 \times \rM' _0) \oplus C^* (\rM'' _0 \times \rM'' _0) 
= \cK \oplus \cK ,$$
where each copy of $\cK$ corresponds to one component.
Therefore \eqref{Comp4} becomes the exact sequence:
\begin{equation}
\label{CD2.2}
\bbK _1 (C^* (\bbR \times \rM _1 \times \rM _1 )) \cong \bbZ \xrightarrow{\partial {}} \bbZ \oplus \bbZ 
\xrightarrow{\iota _*} \bbK _0 (C^* (\cG)) \to 0.
\end{equation}
We compute the connecting map $\partial$ using the morphism of exact sequences
$$
\begin{CD}
0 @>>> 
\begin{array}{c}
C^* (\rM' _0 \times \rM' _0) \\
\oplus C^* (\rM'' _0 \times \rM'' _0) 
\end{array}
@>>> C^* (\cG) @>>> C^* (\cG ) / (\cK \oplus \cK) @>>> 0 \\
@. @| @VVV @VVV @. \\
0 @>>> 
\cK \oplus \cK @>>> 
\begin{array}{c}
C^* (\cG' ) \\
\oplus C^* (\cG'')
\end{array} 
@>>> 
\begin{array}{c}
C^* (\cG' ) / \cK \\
\oplus C^* (\cG'' ) / \cK
\end{array}
@>>> 0
\end{CD}
$$
where the middle column map is defined by restriction to $\cG' , \cG'' \subseteq \cG$.
From which one gets the commutative diagram:
$$ 
\begin{CD}
\bbK _1 (C^* (\bbR \times \rM _1 \times \rM _1 ))
@>{\partial {}}>> \bbK _0 (C^* (\rM' _0 \times \rM' _0)) \oplus \bbK _0 (C^* (\rM'' _0 \times \rM'' _0)) \\
@VVV @| \\
\bbK _1 (C^* (\cG' ) / \cK) \oplus \bbK _ 1 (C^* (\cG'' ) / \cK) @>{\partial \oplus \partial {}}>> \bbZ \oplus \bbZ
\end{CD}
$$
Here, the bottom row is just two copies of the connection map in \eqref{Incidence}.
On the other hand, we have
$$ C^* (\bbR \times \rM _1 \times \rM _1 ) = C^* (\cG' ) / C^* (\rM' _0 \times \rM' _0) = C^* (\cG'' ) / C^* (\rM'' _0 \times \rM'' _0), $$
and the left column is just two copies of the identity map.
It follows that
$$\partial (1) = 1 \oplus 1 .$$
Hence $\bbK _0 ( C^* (\cG ) ) \cong \bbZ $ and $\iota _* :  \bbZ \oplus \bbZ \to \bbK _0 (C ^* (\cG)) $ in \eqref{CD2.2} is realized as
\begin{equation}
\label{AntiInd}
\iota _* (1 \oplus 0 ) = 0, \iota _* (0 \oplus 1) = -1 .
\end{equation}

We turn to study the index.
First, given any elliptic pseudo-differential operator $\varPsi$ that is invertible on $\cG _1$,
we consider the index of $\varPsi$ in $ \bbK _0 ( C^* ((\rM' _0 \times \rM' _0) \cup (\rM'' _0 \times \rM'' _0))) $.
One has the homomorphism of short exact sequences:
$$
\begin{CD}
0 \to 
\begin{array}{c}
C^* ((\rM' _0 \times \rM' _0) \\
\quad \cup (\rM'' _0 \times \rM'' _0)) 
\end{array}
@>>> \fU (\cG) 
@>>> \fU (\cG) \Big/ 
\begin{array}{c}
C^* ((\rM' _0 \times \rM' _0) \\
\quad \cup (\rM'' _0 \times \rM'' _0)) 
\end{array} 
\to 0 \\
@| @VVV @VVV \\
0 \to 
\begin{array}{c}
C^* (\rM' _0 \times \rM' _0)\\
\oplus C ^* (\rM'' _0 \times \rM'' _0)
\end{array}
@>>> 
\begin{array}{c}
\fU (\cG') \\
\oplus \fU (\cG'')
\end{array}
@>>> 
\begin{array}{c}
\fU (\cG') / C^* (\rM' _0 \times \rM' _0) \\
\oplus \fU (\cG'') / C^* (\rM'' _0 \times \rM'' _0)
\end{array}
\to 0
\end{CD}
$$
where the middle map is given by (the extension of) restricting the pseudo-differential operator on $\cG$ 
to the invariant subspaces $\cG'$ and $\cG''$,
and it is clear the restriction map induces a map for the right column.

By the naturality of the index map, one has
\begin{equation}
\label{CD3}
\begin{CD}
\bbK _1 \Big(\fU (\cG) \Big/ 
\begin{array}{c}
C^* ((\rM' _0 \times \rM' _0) \\
\quad \cup (\rM'' _0 \times \rM'' _0)) 
\end{array} 
\Big) 
@>{\partial}>> 
\bbK _0 \Big( 
\begin{array}{c}
C^* ((\rM' _0 \times \rM' _0) \\
\quad \cup (\rM'' _0 \times \rM'' _0)) 
\end{array} 
\Big)  \\
@VVV @| \\
\begin{array}{c}
\bbK _1 (\fU (\cG') / C ^*(\rM' _0 \times \rM' _0)) \\
\oplus \bbK _1 (\fU (\cG'') / C^* (\rM'' _0 \times \rM'' _0)) 
\end{array}
@>{\partial \oplus \partial}>> \bbK (\cK \oplus \cK) \cong \bbZ \oplus \bbZ
\end{CD}
\end{equation}
where ${\partial} \oplus {\partial}$ is just the Fredholm index maps for each copy.

To conclude, the index of $\varPsi$ in $\bbK _0 ( C^* ((\rM' _0 \times \rM' _0) \cup (\rM'' _0 \times \rM'' _0))) $
is the direct sum of the Fredholm indexes of $\varPsi |_{\cG '}$ and $\varPsi |_{\cG ''}$
(and the Fredholm index of $\varPsi $ is clearly the arithmetic sum of the two indexes in $\bbZ$).

We turn to derive a formula for the index in $\bbK _0 (C^* (\cG))$
of any order 1 elliptic pseudo-differential operator $D$ that is invertible on $\bbR \times \rM _1 \times \rM _1$. 
We use Equation \eqref{Surj} which in this case reads:
\begin{equation}
\begin{CD}
\bbK _1 (\fU / C^* ((\rM' _0 \times \rM' _0) \cup (\rM'' _0 \times \rM'' _0))) @>>> \bbK _0 (\cK \oplus \cK) = \bbZ \oplus \bbZ \\
@VVV @VV{\iota _*}V \\
\bbK _1 (\fU / C ^* (\cG)) @>{\partial {}}>> \bbK _0 ( C ^* (\cG)) 
\end{CD}
\end{equation}
where the right column is just \eqref{AntiInd}. It follows from Lemma \ref{APS} that 
\begin{equation}
\partial ( [D (D (D)^* + \id )^{- \frac{1}{2}} ])
= \int _{\rM' _0} a' _0 - \int _{\rM'' _0} a'' _0 \in \bbK _0 (\cC ^* (\cG)) \cong \bbZ,
\end{equation}
where $a' _0, a'' _0$ are respectively the constant term in $t \to 0 $ asymptotic expansion of the traces 
of the heat kernels 
$$ e ^{- t D D ^* }|_{\rM' _0 \times \rM' _0 } +  e ^{- t D ^* D }|_{\rM' _0 \times \rM' _0 }
\text{ and }
e ^{- t D D ^*} |_{\rM'' _0 \times \rM'' _0 } + e ^{- t D ^* D} |_{\rM'' _0 \times \rM'' _0 } .$$

We turn to general case when $D \in \rD ^1 (\cG, \rE) + S ^* (\cG)$ and is elliptic.
From Section 4, there exists a Fredholm operator $\tilde D $ of the from
$$ \tilde D = D + R , R \in S ^* (\cG)$$ 
such that 
$$ \partial ([\tilde D (\tilde D \tilde D^* + \id )^{- \frac{1}{2}} ])
= \partial ([D (D D^* + \id )^{- \frac{1}{2}} ]) \in \bbK _0 (C^* (\cG)).$$
Since $\tilde D$ and $D$ have the same principal symbol the 
$t \to 0$ asymptotic expansions of the traces their heat kernels have the same constant.
It follows that:
\begin{thm}
\label{OneInd}
One has the index formula for any order 1 elliptic pseudo-differential operator $D \in \rD ^1 (\cG, \rE) + S ^* (\cG)$:
$$ \partial ([D (D (D)^* + \id )^{- \frac{1}{2}} ])
= \int _{\rM' _0} a' _0 - \int _{\rM'' _0} a'' _0 \in \bbK _0 (C^* (\cG)) \cong \bbZ.$$
\end{thm}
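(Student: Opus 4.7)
The proof is essentially assembled in the discussion preceding the statement; the task is to organise it as a two-step argument and to verify the asserted cancellations.

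My plan is to first treat the Fredholm case, where $D|_{\bbR \times \rM_1 \times \rM_1}$ is invertible. Using the commutative diagram \eqref{CD3}, the $\bbK_0(C^*(\cG))$-index of $D$ equals $\iota_*$ applied to the pair of Fredholm indices $(\ind(D|_{\cG'}),\ \ind(D|_{\cG''})) \in \bbK_0(\cK \oplus \cK) = \bbZ \oplus \bbZ$. By Lemma~\ref{APS}, each of these Fredholm indices is an interior integral of $a_0$ plus an $\eta$-term which, by the last clause of that lemma, depends only on the common restriction $D|_{\bbR \times \rM_1 \times \rM_1}$. By \eqref{AntiInd} together with exactness of \eqref{CD2.2}, the map $\iota_*$ realises the quotient of $\bbZ \oplus \bbZ$ by the image $\partial(1) = (1,1)$, and hence acts as a difference map; the two identical $\eta$-terms therefore cancel in the image, leaving precisely $\int_{\rM'_0} a'_0 - \int_{\rM''_0} a''_0$.

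For a general elliptic $D \in \rD^1(\cG,\rE) + S^*(\cG)$, I would invoke the paragraph immediately preceding the theorem: the framework of Section~4 yields a Schwartz perturbation $R \in S^*(\cG)$ such that $\tilde D := D + R$ is Fredholm and $\partial[\tilde D(\tilde D \tilde D^* + \id)^{-1/2}] = \partial[D(DD^* + \id)^{-1/2}]$ in $\bbK_0(C^*(\cG))$. Applying the Fredholm formula above to $\tilde D$, and observing that $D$ and $\tilde D$ share their principal symbol so that the $t \to 0$ heat-kernel constants $a'_0, a''_0$ on the interior leaves are unchanged by the perturbation, completes the argument.

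The main obstacle I anticipate is verifying the exact cancellation of the two $\eta$-terms in the Fredholm step: one must check that the $\eta$-formulas computed on the $\cG'$ and $\cG''$ sides agree, not merely that both are determined by the same datum. This requires a consistent choice of the boundary defining function $r$ on $\bar\rM'_0$ and $\bar\rM''_0$, which is possible thanks to the triviality of the normal bundle of $\rM_1$ built into the hypotheses of this subsection. A secondary technical point is the invariance of $a_0$ under the perturbation $R \in S^*(\cG)$; this should follow from the general fact that smoothing contributions to the $Z$-function underlying the renormalised trace of Section~\ref{OddRenorm} are holomorphic at the origin and hence produce no residue that could alter $a_0$.
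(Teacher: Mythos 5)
Your proposal follows essentially the same route as the paper: the Fredholm case is handled by combining the restriction diagram with the identification of $\iota_*$ as the difference map coming from $\partial(1) = 1 \oplus 1$, so that the two $\eta$-terms of Lemma~\ref{APS} (each determined by the common restriction $D|_{\bbR \times \rM_1 \times \rM_1}$) cancel, and the general case is reduced to this one by the Schwartz perturbation $\tilde D = D + R$ of Section~4 together with invariance of the heat-kernel constants under a change that preserves the principal symbol. Your added remarks on choosing a consistent boundary defining function across $\bar\rM'_0$ and $\bar\rM''_0$ to ensure the two $\eta$-functionals literally agree are a point the paper leaves implicit, but they do not change the argument.
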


\section{Isotropy subgroup of even dimension}
In this section we consider $\cG$ of the form
$$\cG = (\rM _0 \times \rM _0 ) \cup (\bbR ^q \times \rM _1 \times \rM _1), \text{ $q$ even.}$$
From Equation \eqref{Comp3}, one readily computes:
\begin{thm}
\label{EvenK}
One has
$$ \bbK _0 (C^* (\cG)) \cong \bbZ \oplus \bbZ , \quad \bbK _1 (C^* (\cG)) \cong \{ 0 \}.$$
\end{thm}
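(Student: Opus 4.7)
The plan is simply to unpack the six-term exact sequence \eqref{Comp3} and read off the answer, since all the substantive work --- identifying $C^*(\bar \cG_1) \cong \cK \otimes C^*(\bbR^q)$ and computing its $\bbK$-groups via Connes' Thom isomorphism --- has already been done in Section 3. For $q$ even, Connes' Thom isomorphism gives $\bbK_0(\cK \otimes C^*(\bbR^q)) \cong \bbK_0(\cK) \cong \bbZ$ and $\bbK_1(\cK \otimes C^*(\bbR^q)) \cong \bbK_1(\cK) = 0$, so everything is in place to chase the diagram.

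First I would substitute these values into the cyclic six-term exact sequence arising from the short exact sequence $0 \to \cK \to C^*(\cG) \to \cK \otimes C^*(\bbR^q) \to 0$. The $\bbK_1(C^*(\cG))$ position then sits between two zero groups, forcing $\bbK_1(C^*(\cG)) = 0$ immediately. With that observation, the remaining $\bbK_0$ portion of the sequence collapses to the short exact sequence
$$0 \to \bbZ \to \bbK_0(C^*(\cG)) \to \bbZ \to 0.$$
Since the quotient $\bbZ$ is free abelian (equivalently, $\mathrm{Ext}^1(\bbZ,\bbZ)=0$), this extension splits, yielding $\bbK_0(C^*(\cG)) \cong \bbZ \oplus \bbZ$.

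There is essentially no obstacle here; the argument is a purely homological reading of the diagram already assembled. The only minor point worth flagging is that both connecting maps adjacent to $\bbK_1(C^*(\cG))$ vanish for parity reasons --- their source or target is zero --- which is precisely what decouples the $\bbK_0$ extension problem from the rest of the sequence and makes the splitting automatic.
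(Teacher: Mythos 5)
Your proposal is correct and is exactly the paper's argument: the paper simply says the result is ``readily computed'' from the six-term sequence \eqref{Comp3}, and your diagram chase ($\bbK_1(C^*(\cG))$ trapped between the two zero groups, and the resulting extension $0 \to \bbZ \to \bbK_0(C^*(\cG)) \to \bbZ \to 0$ splitting because $\bbZ$ is free) is precisely that computation, consistent with the paper's subsequent description of $\bbK_0(C^*(\cG))$ as free on $\iota_*(1)$ and a lift of $1 \in \bbK_0(C^*(\bar\cG_1))$. The only caveat is that \eqref{Comp3} as printed appears to have the $\bbK_0$/$\bbK_1$ positions transposed; your reading is the correct one for the standard six-term sequence with $\bbK_\bullet(\cK\otimes C^*(\bbR^q))$ computed via Connes' Thom isomorphism.
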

The above isomorphism can be made more explicit.
Fix any $[ \varPsi ] \in \bbK _0 (C^* (\cG _0)) $ such that $\pi _* ([ \varPsi ]) = 1 \in \bbK _0 (C^* (\bar \cG _1))$.
Then $\bbK _0 (C^* (\cG))$ is the free Abelian group generated by $\iota _* (1) $ and $[ \varPsi ]$.

\subsection{Index formulas}

Recall that the map $\pi _* : \bbK _0 (C^* (\cG)) \to \bbK _0 (C^* (\cG _1))$ is induced by restricting the kernel to $\bar \cG _1$.
Index formulas for $\bar \cG _1 = \bbR^q \times \rM _1 \times \rM _1$ are well known.
For example, by \cite[Theorem 8]{Nistor;BoundaryK}, we have
\begin{equation}
\partial (D (D D^* + 1) ^{-\frac{1}{2}}) = (-1) ^{\dim \rM _1} \langle \ch _0 [\sigma (D)|_{\rM _1}] \wp ^*\td (\rM _1), [S^* \rM _1] \rangle,
\end{equation}
where $\td (\rM _1)$ denotes the Todd class, $\wp : S^* \rM _1 \to \rM _1 $ is the fiber bundle projection and $[S^* \rM _1]$ is the fundamental class.

\begin{rem}
Alternatively, the Lie algebroid of $\cG_1$ is $\bbR^q \oplus T \rM_1$.
Therefore $\wedge ^{\dim \rM} T^* \rM \otimes \wedge ^{\dim \rM} \rA$ is a trivial vector bundle.
Moreover the product of any density on $\rM _1$ and the invariant volume form on $\bbR ^q$ defines an invariant measure $\varOmega$.
It follows one can apply \cite[Theorem B]{Pflaum;GpoidLocalizedInd} to obtain the formula for any elliptic differential operator:
\begin{align}
\label{GpoidLocalizedInd}
\langle \chi (1|_{\rM _1}) &, \partial (D (D D^* + 1) ^{-\frac{1}{2}}) \rangle \\ \nonumber
=& \frac{1}{(2 \pi \sqrt{-1}) ^{\dim \rA}} \int _{\rA' | _{\rM _1} } \hat A (\pi ^! \rA |_{\cG _1} ) \ch _0 [\sigma (D |_{\cG _1})] \varOmega _{\pi ^! \rA} .
\end{align}
Here $\langle \:, \, \rangle$ denotes the pairing between cyclic homology and $\bbK$-theory, and $\chi$ denotes the van-Erp map.

However, in general an invariant measure may not exist, and \cite{Pflaum;GpoidLocalizedInd} cannot be applied.
The obstruction to the existence of such measure is studied in \cite{Lu;Modular,Mestre;Thesis}.
\end{rem}

As for the Fredholm index part, if $\rM _1$ is a point and $\rA $ is orientable, 
then Theorem \ref{TopIndex} also applies to compute the Fredholm part, i.e., one has
\begin{thm}
Let $\cG =  (\rM _0 \times \rM _0 ) \cup \bbR ^q $, $D $ be a 1 order elliptic operator, then the index $\partial ([D (D D^* + 1) ^{-\frac{1}{2}}])$ is given by:
\begin{align*}
\partial ([D (D D^* + 1) ^{-\frac{1}{2}}]) 
= & (-1)^{\dim \rM} \langle \ch _0 [\sigma (\varPsi)] \wp ^* \td (\rA _\bbC ) , [\rA |_{\rM _0}] \rangle \\
&\oplus (-1) ^{\dim \rM _1} \langle \ch _0 [\sigma (D) |_{\rM _1}\ \wp ^*\td (\rM _1), [S^* \rM _1] \rangle.
\end{align*}
\end{thm}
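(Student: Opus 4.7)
The plan is to split the computation along the two summands of $\bbK_0(C^*(\cG)) \cong \bbZ \oplus \bbZ$ and treat each independently. From the composition series analyzed in Theorem \ref{EvenK}, the exact sequence
$$0 \to \bbK_0(\cK) \xrightarrow{\iota_*} \bbK_0(C^*(\cG)) \xrightarrow{\pi_*} \bbK_0(C^*(\bbR^q)) \to 0$$
splits because the right-hand group is free Abelian. Hence $\partial([D(DD^*+\id)^{-\frac{1}{2}}])$ is completely determined by its image under $\pi_*$ together with a well-defined $\iota_*$-component in $\bbK_0(\cK)$, and the task reduces to identifying each of these with one of the two expressions in the statement.

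For the $\pi_*$-summand I would use naturality of the boundary map with respect to restriction of kernels to the invariant submanifold $\rM_1$, giving
$$\pi_*\,\partial\bigl([D(DD^*+\id)^{-\frac{1}{2}}]\bigr) = \partial\bigl([D|_{\bar\cG_1}(D|_{\bar\cG_1} D|_{\bar\cG_1}^* + \id)^{-\frac{1}{2}}]\bigr) \in \bbK_0(C^*(\bbR^q)).$$
Because $\rM_1$ is a point, $\bar\cG_1 = \bbR^q$ is a Lie group and the right-hand side is precisely the index computed by the formula of \cite{Nistor;BoundaryK} recalled just above the theorem; this yields the second summand $(-1)^{\dim\rM_1}\langle\ch_0[\sigma(D)|_{\rM_1}]\wp^*\td(\rM_1),[S^*\rM_1]\rangle$.

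For the $\iota_*$-summand I would adapt the proof of Theorem \ref{TopIndex} essentially verbatim. The homotopy argument given there operates on the joint symbol space $C^0(S\rA'\cup \bar\rA'|_{p_0})$ and is purely topological; it never uses that $q$ is odd. It therefore still permits one to replace $D$ by a homotopic operator of the form $A+\varPsi''$, with $A$ a constant tensor on $\bbR^q$ and $\varPsi''\in\fU(\rM_0\times\rM_0)$ compactly supported. Such an operator is Fredholm, so its class in $\bbK_0(C^*(\cG))$ lies in $\iota_*\bbK_0(\cK)$, and applying the compactly supported Atiyah--Singer theorem as in \eqref{FredholmAS} yields exactly $(-1)^{\dim\rM}\langle\ch_0[\sigma(D)]\wp^*\td(\rA_\bbC),[\rA|_{\rM_0}]\rangle$.

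The main thing to be careful about is the compatibility of the two homotopies with the direct-sum decomposition: one must verify that the replacement $D\leadsto A+\varPsi''$ used to extract the Fredholm summand does not alter the $\pi_*$-image (the Atiyah--Singer-type contribution from $\bbR^q$) that was computed separately. Since $A+\varPsi''$ is Fredholm, its $\pi_*$-image is zero, while the homotopy keeps $\pi_*[D]$ invariant; provided these two facts are reconciled (i.e.\ the Fredholm representative is constructed so as to have invertible restriction to $\bar\cG_1$, exactly as in the construction preceding \eqref{FredholmAS}), the two summands combine without interference into the direct-sum formula asserted in the theorem.
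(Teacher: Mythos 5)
Your overall strategy---splitting $\bbK_0(C^*(\cG))\cong\bbZ\oplus\bbZ$ along $\iota_*$ and $\pi_*$, computing the $\pi_*$-component by restriction to $\bar\cG_1$ together with the formula recalled from \cite{Nistor;BoundaryK}, and the $\iota_*$-component by the method of Theorem \ref{TopIndex}---is exactly what the paper intends; the paper itself offers nothing beyond the assertion that ``Theorem \ref{TopIndex} also applies to compute the Fredholm part.'' The $\pi_*$-half of your argument (naturality of the connecting map under restriction of kernels) is fine.

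The gap is in the $\iota_*$-half. You assert that the proof of Theorem \ref{TopIndex} ``never uses that $q$ is odd'' and hence still replaces $D$ by a homotopic operator $A+\varPsi''$ with the same class in $\bbK_0(C^*(\cG))$. Oddness \emph{is} used, just not in the symbol-space homotopy you point to: for $q$ odd one has $\bbK_0(C^*(\bar\cG_1))\cong\bbK_0(\cK\otimes C^*(\bbR^q))=\{0\}$, so by \eqref{OddIsom} the map $\iota_*$ is surjective and there is no obstruction to representing $\partial[D]$ by a fully elliptic operator. For $q$ even the obstruction is precisely $\pi_*\partial[D]\in\bbZ$: an operator of the form $A+\varPsi''$ is Fredholm, so $\pi_*\partial[A+\varPsi'']=0$, while any homotopy through elliptic operators preserves $\pi_*\partial[D]$. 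Your closing paragraph notices exactly this tension and then declares the two facts ``reconciled''; they cannot be when $\pi_*\partial[D]\neq 0$---the required representative simply does not exist. The repair is to apply the Theorem \ref{TopIndex} argument not to $D$ but to the class $\partial[D]-Y\,[\varPsi]$, where $Y=\pi_*\partial[D]$ and $[\varPsi]$ is the fixed generator with $\pi_*[\varPsi]=1$, since that difference does lie in $\iota_*\bbK_0(\cK)$. But then the first coordinate of the answer is $\AS(\sigma(D))-Y\cdot\AS(\sigma(\varPsi))$ in the chosen basis, and to recover the stated formula you must in addition choose (or show one can choose) the splitting generator $[\varPsi]$ with vanishing interior Atiyah--Singer term. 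Without that normalization the first summand is only well defined up to the choice of splitting; this point is not addressed in your proposal (nor, admittedly, in the paper).
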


One can also consider renormalized index similar to Section \ref{OddRenorm}.
However, the arguments in Lemma \ref{OddEta} cannot be used to show that the eta term vanished.
On the other hand because the natural incursion $\bbK _0 (\cK) \to \bbK _0 (C^* (\cG))$ is injective (by Equation \eqref{Comp3}),
we know a-prior that the eta term only depends on the principal symbol!

\section{Some examples with $r > 1$}
In this section we compute some more complicated examples.
Let $\cG$ be a strongly central boundary groupoid as in Definition \ref{Central}.
We further assume that $\rG ^0 _1, \cdots, \rG ^0 _r$ are all exponential.

\subsection{Even dimensional isotropy subgroups}
In this section, we assume that
$\dim (\rG _i), i = 1, \cdots, r$ are all even.
We have exact sequences form the composition series:
\begin{align}
\label{EvenIterated}
\nonumber
0 \to C^* (\rM _0 \times \rM _0 )\cong \cK & \to C ^* (\cG ) \to C ^* (\rG ^0 _1 \times \cG ^0 _1) \to 0 \\
0 \to C^* (\rM _1 \times \rM _1 )\cong \cK & \to C ^* (\cG ^0 _1 ) \to C ^* (\rG ^0 _2 \times \cG ^0 _2) \to 0 \\ \nonumber
& \cdots \\ \nonumber
0 \to C^* (\rM _{r-1} \times \rM _{r-1} )\cong \cK & \to C ^* (\cG ^0 _{r -1} ) \to C ^* (\rG ^0 _r \times \cG ^0 _r) \to 0.
\end{align}
Note that all $\rG ^0 _i$ are even dimensional.
The six term exact sequence induced by the last exact sequence in \eqref{EvenIterated} is just \eqref{Comp3}.
Therefore
$$ \bbK _0 (C ^* (\cG ^0 _{r-1} )) \cong \bbZ \oplus \bbZ , \quad \bbK _1 (C ^* (\cG ^0 _{r -1} )) \cong \{ 0 \} .$$
Hence one calculates inductively
$$
\begin{CD}
\bbK _0 (\cK) \cong \bbZ @>>> \bbK _0 (C^* (\cG ^0 _{r-k-1})) @>>> \bbK _0(C^* (\cG ^0 _{r-k})) \cong \bbZ ^{k+1} \\
@AAA @. @VVV \\
\bbK _1 (C^* (\cG ^0 _{r-k})) \cong \{ 0 \} @<<< \bbK _1 (C^* (\cG ^0 _{r-k-1})) @<<< \bbK _1 (\cK) \cong \{ 0 \}
\end{CD}
$$
where $\bbZ ^{k+1}$ denotes $\bbZ \oplus \cdots \oplus \bbZ $ with $k+1$ copies.
It follows that:
\begin{prop}
One has
$$ \bbK _0 (C^* (\cG)) \cong \bbZ ^{r+1} , \quad \bbK _1 (C^* (\cG)) \cong \{ 0 \}.$$
\end{prop}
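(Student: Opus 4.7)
The plan is to proceed by downward induction on $k$, exactly along the lines the excerpt already sketches in its partial diagram, starting from $\cG^0_{r-1}$ (where the previous section's Theorem \ref{EvenK} applies) and climbing up to $\cG = \cG^0_0$. The key ingredients to combine are (i) the composition series \eqref{EvenIterated}, (ii) the identification $C^*(\rG^0_i \times \cG^0_i) \cong C^*(\rG^0_i) \otimes C^*(\cG^0_i)$, and (iii) Connes' Thom isomorphism, which shifts the $\bbK$-degree by $\dim \rG^0_i$. Since every $\rG^0_i$ is assumed exponential and even dimensional, Connes' Thom isomorphism shifts by an even integer, i.e.\ it gives $\bbK_\bullet(C^*(\rG^0_i \times \cG^0_i)) \cong \bbK_\bullet(C^*(\cG^0_i))$ with no parity swap.

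For the base case $k = 0$, the last exact sequence of \eqref{EvenIterated} together with Theorem \ref{EvenK} gives directly $\bbK_0(C^*(\cG^0_{r-1})) \cong \bbZ \oplus \bbZ$ and $\bbK_1(C^*(\cG^0_{r-1})) \cong \{0\}$. For the inductive step, I would assume $\bbK_0(C^*(\cG^0_{r-k})) \cong \bbZ^{k+1}$ and $\bbK_1(C^*(\cG^0_{r-k})) \cong \{0\}$, and feed this into the six-term exact sequence coming from the $(r-k-1)$-th row of \eqref{EvenIterated}:
\begin{equation*}
\begin{CD}
\bbK_0(\cK) @>>> \bbK_0(C^*(\cG^0_{r-k-1})) @>>> \bbK_0(C^*(\cG^0_{r-k})) \\
@AAA @. @VVV \\
\bbK_1(C^*(\cG^0_{r-k})) @<<< \bbK_1(C^*(\cG^0_{r-k-1})) @<<< \bbK_1(\cK)
\end{CD}
\end{equation*}
Since $\bbK_1(\cK) = 0$ and (by the induction hypothesis) $\bbK_1(C^*(\cG^0_{r-k})) = 0$, the sequence collapses to the short exact sequence
\begin{equation*}
0 \to \bbZ \to \bbK_0(C^*(\cG^0_{r-k-1})) \to \bbZ^{k+1} \to 0,
\end{equation*}
together with $\bbK_1(C^*(\cG^0_{r-k-1})) = 0$. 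Freeness of $\bbZ^{k+1}$ forces the sequence to split, so $\bbK_0(C^*(\cG^0_{r-k-1})) \cong \bbZ^{k+2}$. Iterating up to $k = r-1$ yields the claimed $\bbK_\bullet(C^*(\cG))$.

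There is no real obstacle in this argument; the main points that need care are bookkeeping of parity (making sure that at each stage Connes' Thom isomorphism does not introduce a degree shift that would move $\bbK_1$'s into $\bbK_0$'s and spoil the vanishing of connecting maps) and justifying that the composition series \eqref{EvenIterated} really provides the six-term exact sequences stated, which follows from Lemma \ref{Comp} applied to the closed invariant sub-manifolds $\bar\rM_{i+1} \subset \bar\rM_i$ inherent in the strongly central structure of Definition \ref{Central}. The only mild subtlety is that the connecting map $\bbK_1(C^*(\cG^0_{r-k})) \to \bbK_0(\cK)$ vanishes automatically because its domain is trivial by induction; in particular one never needs to identify the connecting map explicitly, which is what makes the even-dimensional case so much cleaner than its odd-dimensional counterpart.
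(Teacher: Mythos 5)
Your proposal is correct and follows essentially the same route as the paper: downward induction through the composition series \eqref{EvenIterated}, with the base case supplied by Theorem \ref{EvenK}, Connes' Thom isomorphism (even shift, hence no parity swap) identifying $\bbK_\bullet(C^*(\rG^0_i \times \cG^0_i))$ with $\bbK_\bullet(C^*(\cG^0_i))$, and the six-term sequence collapsing to $0 \to \bbZ \to \bbK_0(C^*(\cG^0_{r-k-1})) \to \bbZ^{k+1} \to 0$. If anything, you are more explicit than the paper about the splitting by freeness of the quotient and about where the Thom isomorphism enters.
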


\begin{exam}
The results in this section clearly applies to the symplectic groupoid in Example \ref{BruhatExam}.
We get
$$ \bbK _0 (C^* (\cG)) \cong \bbZ ^{n + 1} , \quad \bbK _1 (C^* (\cG)) \cong \{ 0 \} .$$
\end{exam}

\subsection{More examples with $r = 2$}
We turn to another example of strongly central groupoid of the form
$$\cG = (\rM _0 \times \rM _0) \cup (\rG _1 \times \rM _1 \times \rM _1) \cup (\rG _2 \times \rM _2 \times \rM _2).$$
By assumption one has $\bar \cG _1 = \rG ^0 _1 \times \cG ^0 _1 $,
where $\cG ^0 _1 = (\rM _1 \times \rM _1) \cup (\rG ^0 _2 \times \rM _2 \times \rM _2)$.
We shall further assume that $\dim \rG ^0 _1$ and $\dim \rG ^0 _2$ are both odd and greater than $1$.

Form the composition series Lemma \ref{Comp}, we have the morphism of exact sequences
\begin{equation}
\begin{CD}
\label{CD8.1}
0 \to C^* (\cG _{\rM _0}) @>>> C^* (\cG) @>>> C^* (\cG _{\rM _1 \cup \rM _2}) \to 0 \\
@VVV @| @VVV \\
0 \to C^* (\cG _{\rM _0 \cup \rM _1}) @>>> C^* (\cG) @>>> C^* (\cG _{\rM _2}) \to 0
\end{CD}
\end{equation}
where the left and right column maps respectively comes from the composition series
\begin{align}
0 & \to C^* (\cG _{\rM _0}) \to C^* (\cG _{\rM _0 \cup \rM _1}) \to C^* (\cG _{\rM _1}) \to 0 \\ \nonumber
0 & \to C^* (\cG _{\rM _1}) \to C^* (\cG _{\rM _1 \cup \rM _2}) \to C^* (\cG _{\rM _2}) \to 0 .
\end{align}
Equation \eqref{CD8.1} induces six term exact sequences
$$
\begin{CD}
\cdots \to \bbK _0 (C^* (\cG _{\rM _1 \cup \rM _2})) \cong \bbK _1 (C^* (\cG ^0 _1)) @>>> \bbK _1 (C^*(\cG _{\rM _0})) \cong \{ 0 \} \to \cdots \\
@VVV @VVV \\
\cdots \to \bbK _0 (C ^* (\cG _{\rM _2})) \cong \bbK _1 (C^* ((\cG ^0 _1) _{\rM _2})) @>>> \bbK _1 (C^* (\cG _{\rM _1 \cup \rM _2})) \to \cdots .
\end{CD}
$$
By Equation \eqref{OddIsom}, the left column in the diagram above is an isomorphism. It follows that bottom row is the zero map.
The rest of the six terms exact sequence induced by the bottom row of \eqref{CD8.1} now reads
\begin{align*}
\bbK _1 (C^* (\cG _{\rM _2})) \cong \{ 0 \} & \to \bbK _0 (C^* (\cG _{\rM _0 \cup \rM _1})) \to \bbK _0 (C^* (\cG)) \to \bbK _0 (C^* (\cG _{\rM _2})) \\
& \xrightarrow{0} \bbK _1 (C^* (\cG _{\rM _0 \cup \rM _1})) \to \bbK _1 (C^* (\cG)) \xrightarrow{0} \cdots .
\end{align*}
Using Theorem \ref{OddMain}, $\bbK _0 (C^* (\cG _{\rM _0 \cup \rM _1})) \cong \bbZ \cong \bbK _1 (C^* (\cG _{\rM _0 \cup \rM _1}))$.
Hence we conclude that
\begin{prop}
One has
$$ \bbK _0 (C^* (\cG)) \cong \bbZ \oplus \bbZ , \quad \bbK _1 (C^* (\cG)) \cong \bbZ.$$
\end{prop}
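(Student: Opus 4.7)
The plan is to finish off the six-term exact sequence computation that has essentially been set up in the paragraphs preceding the proposition. The strategy is to apply the morphism of six-term sequences induced by \eqref{CD8.1}, use the established vanishing of one connecting map, and then split the resulting short exact sequences.

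First I would record the $\bbK$-groups of the three ingredients in the bottom row of \eqref{CD8.1}. By Theorem \ref{OddMain} applied to $\cG_{\rM_0 \cup \rM_1} = (\rM_0 \times \rM_0) \cup (\rG^0_1 \times \rM_1 \times \rM_1)$ (whose isotropy has odd dimension $>1$), both $\bbK_0$ and $\bbK_1$ of $C^*(\cG_{\rM_0 \cup \rM_1})$ are $\bbZ$. For $\cG_{\rM_2} = \rG_2 \times \rM_2 \times \rM_2$ with $\rG_2 = \rG^0_1 \times \rG^0_2$, Proposition 2 of \cite{Nistor;GeomOp} gives $C^*(\cG_{\rM_2}) \cong \cK \otimes C^*(\rG_2)$, and since $\dim \rG_2 = \dim \rG^0_1 + \dim \rG^0_2$ is even, Connes' Thom isomorphism yields $\bbK_0(C^*(\cG_{\rM_2})) \cong \bbZ$ and $\bbK_1(C^*(\cG_{\rM_2})) \cong 0$.

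Second, I would exploit the naturality diagram already displayed in the text. The composition across the top row sends $\bbK_0(C^*(\cG_{\rM_1\cup\rM_2}))$ to $\bbK_1(C^*(\cG_{\rM_0})) = \bbK_1(\cK) = 0$, and the left vertical map is an isomorphism by Connes' Thom combined with \eqref{OddIsom} applied to $\cG^0_1$. Commutativity therefore forces the connecting map $\partial \colon \bbK_0(C^*(\cG_{\rM_2})) \to \bbK_1(C^*(\cG_{\rM_0 \cup \rM_1}))$ in the bottom row to vanish.

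Third, with this vanishing in hand, the six-term exact sequence attached to the bottom row of \eqref{CD8.1} breaks into
\begin{align*}
0 \to \bbZ \to \bbK_0(C^*(\cG)) \to \bbZ \to 0, \qquad 0 \to \bbZ \to \bbK_1(C^*(\cG)) \to 0.
\end{align*}
The second gives $\bbK_1(C^*(\cG)) \cong \bbZ$ immediately, and the first splits because the quotient $\bbZ$ is free Abelian, yielding $\bbK_0(C^*(\cG)) \cong \bbZ \oplus \bbZ$.

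The main obstacle is really only the second step, namely justifying that the connecting map in the bottom row vanishes; this requires a careful identification of the left vertical map with a restriction compatible with Connes' Thom, and an appeal to the isomorphism \eqref{OddIsom}. Once this vanishing is secured, the remaining extraction of short exact sequences and their splitting is routine.
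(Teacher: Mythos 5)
Your proposal follows essentially the same route as the paper: both use the morphism of six-term exact sequences induced by \eqref{CD8.1}, deduce that the connecting map $\bbK_0(C^*(\cG_{\rM_2})) \to \bbK_1(C^*(\cG_{\rM_0\cup\rM_1}))$ vanishes because the restriction map on the left column is an isomorphism (via Connes' Thom and \eqref{OddIsom} applied to $\cG^0_1$) while the top connecting map lands in $\bbK_1(\cK)=0$, and then split the resulting short exact sequences using Theorem \ref{OddMain} and the even-dimensionality of $\rG_2 = \rG^0_1\times\rG^0_2$. The argument is correct; you merely make explicit the computation of $\bbK_\bullet(C^*(\cG_{\rM_2}))$, which the paper leaves implicit.
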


\section{Some concluding remarks}
In this paper we have computed the $\bbK$-theory and index formulas for some boundary groupoid $C^*$-algebras.
It is obvious that we are not considering the most general cases.
For example it would be interesting to consider manifolds with fibered boundary.
Also it is not clear how a renormalized integral can be defined for boundary groupoids with $r \geq 2$.
Last but not the least the $\eta$ term of the even dimensional case is still mysterious.


\end{document}